\documentclass[11pt]{article}

\usepackage[a4paper,margin=1in]{geometry}
\usepackage{amsmath,amssymb,amsthm,mathtools,lineno,enumitem}
\usepackage{xcolor}
\usepackage{aliascnt}

\usepackage[colorlinks=true,
linkcolor=blue!50!black,
citecolor=blue!50!black,
urlcolor=blue!50!black]{hyperref}
\usepackage[nameinlink,noabbrev,capitalise]{cleveref}

\theoremstyle{plain}

\newtheorem{theorem}{Theorem}[section]
\crefname{theorem}{Theorem}{Theorems}
\Crefname{theorem}{Theorem}{Theorems}

\newaliascnt{proposition}{theorem}
\newtheorem{proposition}[proposition]{Proposition}
\aliascntresetthe{proposition}
\crefname{proposition}{Proposition}{Propositions}
\Crefname{proposition}{Proposition}{Propositions}

\newaliascnt{corollary}{theorem}

\aliascntresetthe{corollary}
\crefname{corollary}{Corollary}{Corollaries}
\Crefname{corollary}{Corollary}{Corollaries}

\newaliascnt{lemma}{theorem}
\newtheorem{lemma}[lemma]{Lemma}
\aliascntresetthe{lemma}
\crefname{lemma}{Lemma}{Lemmas}
\Crefname{lemma}{Lemma}{Lemmas}

\newaliascnt{conjecture}{theorem}
\newtheorem{conjecture}[conjecture]{Conjecture}
\aliascntresetthe{conjecture}
\crefname{conjecture}{Conjecture}{Conjectures}
\Crefname{conjecture}{Conjecture}{Conjectures}

\newaliascnt{problem}{theorem}

\aliascntresetthe{problem}
\crefname{problem}{Problem}{Problems}
\Crefname{problem}{Problem}{Problems}

\newaliascnt{claim}{theorem}

\aliascntresetthe{claim}
\crefname{claim}{Claim}{Claims}
\Crefname{claim}{Claim}{Claims}

\newaliascnt{observation}{theorem}

\aliascntresetthe{observation}
\crefname{observation}{Observation}{Observations}
\Crefname{observation}{Observation}{Observations}

\newaliascnt{setup}{theorem}

\aliascntresetthe{setup}
\crefname{setup}{Setup}{Setups}
\Crefname{setup}{Setup}{Setups}

\newaliascnt{myth}{theorem}

\aliascntresetthe{myth}
\crefname{myth}{Myth}{Myths}
\Crefname{myth}{Myth}{Myths}

\newaliascnt{fact}{theorem}

\aliascntresetthe{fact}
\crefname{fact}{Fact}{Facts}
\Crefname{fact}{Fact}{Facts}

\newaliascnt{algorithm}{theorem}

\aliascntresetthe{algorithm}
\crefname{algorithm}{Algorithm}{Algorithms}
\Crefname{algorithm}{Algorithm}{Algorithms}

\theoremstyle{remark}

\newaliascnt{remark}{theorem}

\aliascntresetthe{remark}
\crefname{remark}{Remark}{Remarks}
\Crefname{remark}{Remark}{Remarks}

\newaliascnt{example}{theorem}

\aliascntresetthe{example}
\crefname{example}{Example}{Examples}
\Crefname{example}{Example}{Examples}

\theoremstyle{definition}

\newaliascnt{definition}{theorem}

\aliascntresetthe{definition}
\crefname{definition}{Definition}{Definitions}
\Crefname{definition}{Definition}{Definitions}

\newaliascnt{construction}{theorem}

\aliascntresetthe{construction}
\crefname{construction}{Construction}{Constructions}
\Crefname{construction}{Construction}{Constructions}

\newaliascnt{question}{theorem}
\newtheorem{question}[question]{Question}
\aliascntresetthe{question}
\crefname{question}{Question}{Questions}
\Crefname{question}{Question}{Questions}

\numberwithin{equation}{section}
\crefname{equation}{Equation}{Equations}
\Crefname{equation}{Equation}{Equations}

\crefformat{section}{\S#2#1#3}
\Crefformat{section}{Section~#2#1#3}
\crefformat{subsection}{\S#2#1#3}
\Crefformat{subsection}{Subsection~#2#1#3}
\crefformat{subsubsection}{\S#2#1#3}
\Crefformat{subsubsection}{Subsubsection~#2#1#3}
\crefrangeformat{section}{\S\S#3#1#4--#5#2#6}
\Crefrangeformat{section}{Sections~#3#1#4--#5#2#6}
\crefmultiformat{section}
  {\S\S#2#1#3}
  { and~#2#1#3}
  {, #2#1#3}
  {, and~#2#1#3}
\Crefmultiformat{section}
  {Sections~#2#1#3}
  { and~#2#1#3}
  {, #2#1#3}
  {, and~#2#1#3}

\newcommand{\ind}{\operatorname{ind}}
\newcommand{\pc}{\operatorname{pc}}
\newcommand{\E}{\mathbb E}
\newcommand{\Prob}{\mathbb P}

\newcommand{\Ffac}{\mathcal F}

\newcommand{\pan}{\mathrm{pan}}

\begin{document}

\title{Pancyclicity of graphs perturbed by a random $F$-factor}
\author{
Dingjia Mao
\thanks{Department of Mathematics, University of California, Irvine.
{Email: \tt{dingjiam@uci.edu}.}}
\and 
Feihong Yuan
\thanks{School of Mathematics and Statistics, Xi'an Jiaotong University.
Email:  {\tt  fhyuan1@gmail.com.}}
\and 
Wenling Zhou
\thanks{School of Mathematics, Shandong University, Jinan, China.
Email: {\tt gracezhou@sdu.edu.cn.}}
}
\date{}
\maketitle
\begin{abstract}
We determine the sharp minimum-degree threshold for Hamiltonicity in graphs
perturbed by a uniformly random $K_r$-factor, resolving a conjecture of
Espuny D\'iaz and Gir\~ao [\emph{Random Structures Algorithms}, 2023].  In
fact, we prove the stronger pancyclic statement.  Let $\alpha^*(K_r)$ and
$\alpha_{\pan}^*(K_r)$ denote the Hamiltonicity and pancyclicity thresholds, respectively. We show that $\alpha^*(K_r)=\alpha_{\pan}^*(K_r)=\rho_r$, where $\rho_r$ is the unique
positive solution of $x^r+rx-1=0$.  The proof is obtained from a general
framework for perturbations by a uniformly random $F$-factor, where $F$ is an
arbitrary fixed connected graph.
\end{abstract}

\section{Introduction}
Hamiltonicity is one of the most fundamental and extensively studied notions in extremal and probabilistic graph theory. 
A \emph{Hamilton cycle} in a graph $G$ is a cycle that contains all vertices of $G$, and $G$ is \emph{Hamiltonian} if it contains a Hamilton cycle. Since deciding whether a graph contains a Hamilton cycle is NP-complete, as
shown by Karp~\cite{Karp1972}, it is natural to seek sufficient conditions for the existence of a Hamilton cycle. The classical theorem of Dirac~\cite{Dirac52}
states that every graph on $n\ge3$ vertices with minimum degree at least $n/2$
is Hamiltonian. 
In the random setting, the binomial random graph $G(n,p)$
becomes Hamiltonian at the point at which the last vertices of degree less than
two disappear; more precisely, Hamiltonicity in
$G(n,p)$ has threshold $(\log n+\log\log n)/n$, as follows from the work of
P\'osa~\cite{Posa76}, Kor\v{s}unov~\cite{Korsunov77}, and Koml\'os and
Szemer\'edi~\cite{KS83}.

The model of \emph{randomly perturbed graphs}, introduced by Bohman, Frieze and Martin~\cite{BFM03}, interpolates between these two viewpoints. In this model, one starts with a deterministic graph $H$ and adds a random graph on the same vertex set. Bohman, Frieze and Martin proved that, for every fixed
$\alpha>0$, there exists $C=C(\alpha)>0$ such that, if $|V(H)|=n$ and
$\delta(H)\ge \alpha n$, then $H\cup G(n,C/n)$ is Hamiltonian with high
probability.  Here and throughout, an event depending on $n$ holds \emph{with
high probability}, abbreviated by \emph{w.h.p.}, if its probability tends to
one as $n\to\infty$.  This result initiated a broad line of work on spanning
structures in randomly perturbed graphs, including bounded-degree spanning
trees~\cite{BHKMPP19,JK20,KKS17}, general bounded-degree spanning
graphs~\cite{BMPP20}, powers of Hamilton cycles~\cite{ADRRS21,ADR23,BPSS24,DRRS20},
Ramsey properties~\cite{DT20}, and tilings and factors~\cite{AKRT26,BTW19,HMT21}.
Hypergraph analogues have also been developed, for instance for Hamilton cycles
and factors in randomly perturbed hypergraphs~\cite{yulin-2022,HZ20,MM18}.

Most results in this direction add independently random edges. A rather different
perturbation is obtained by adding a sparse random regular graph, or more
generally a random factor, where the random edges are highly dependent.  We
write $G_{n,d}$ for a graph chosen uniformly at random from the set of all
$d$-regular graphs on $n$ vertices, whenever this set is non-empty.  It is known
that $G_{n,d}$ is Hamiltonian w.h.p. for every fixed $d\ge3$; see
\cite{CFR02,KSVW01,RW92,RW94} and the survey of Wormald~\cite{Wor99}. 
Thus,
for random regular perturbations, the genuinely sparse cases are $d=1$ and
$d=2$. Espuny D\'iaz and Gir\~ao~\cite{DG23} initiated the study of
Hamiltonicity and pancyclicity in graphs perturbed by $G_{n,d}$ for
$d\in\{1,2\}$. For $d=2$, they obtained a result similar to those obtained
in the binomial setting: 
if $\delta(H)\ge \omega((\log n/n)^{1/4})n$, then w.h.p.~$H \cup G_{n, 2}$ is \emph{pancyclic}, that is, $H \cup G_{n, 2}$ contains a cycle of every length
\(3,4,\ldots,n\). More recently, Dragani\'c and Keevash~\cite{draganic2025p} determined the
corresponding Hamiltonicity threshold asymptotically, showing that it is
$(1+o(1))\sqrt{n\log n/2}$.

The case $d=1$, that is, perturbation by a uniformly random perfect matching,
has a different behaviour.  Espuny D\'iaz and Gir\~ao~\cite{DG23} proved that, for every $\varepsilon>0$, if $\delta(H)\ge
(1+\varepsilon)(\sqrt2-1)n$, then $H\cup G_{n,1}$ is pancyclic w.h.p.; they
also showed that the constant $\sqrt2-1$ is best possible for Hamiltonicity. 
Since a perfect matching is a $K_2$-factor, it is natural to ask what happens
when the perturbation is a uniformly random $F$-factor for a fixed graph $F$.

Throughout the paper, $F$ is a fixed connected graph on $m\ge2$ vertices, with
$V(F)=\{1,\ldots,m\}$, and we assume that $m$ divides $n$.  A uniformly random
$F$-factor $\Ffac$ on an $n$-vertex set $V$ is generated by taking a uniformly
random ordering $v_1,\ldots,v_n$ of $V$ and, for each
$i=0,\ldots,n/m-1$, placing a copy of $F$ on
$\{v_{im+1},\ldots,v_{im+m}\}$ by identifying $j\in V(F)$ with $v_{im+j}$.

For a fixed connected graph $F$,
the corresponding \emph{minimum-degree threshold of Hamiltonicity} is defined as
\[
\begin{aligned}
\alpha^*(F)=\inf \bigl\{ a\in[0,1]:\;&
\text{for every } \varepsilon>0,\text{ there exists } n_0\in\mathbb N
\text{ such that for every graph } \\
&H \text{ on } n\ge n_0 \text{ vertices }
\text{with } \delta(H)\ge (a+\varepsilon)n
\text{ and } |V(F)|\mid n,\\
&\text{ w.h.p. } H\cup \mathcal F \text{ is Hamiltonian}
\bigr\}.
\end{aligned}
\]
We define the pancyclicity threshold
$\alpha_{\pan}^*(F)$ analogously, with ``Hamiltonian'' replaced by
``pancyclic''.  Clearly $\alpha^*(F)\le\alpha_{\pan}^*(F)$.

In \cite{DG23}, the authors showed that $\alpha_{\pan}^*(K_2)=\sqrt{2}-1$, and proposed the following conjecture.
\begin{conjecture}[{\cite[Conjecture 14]{DG23}}]\label{conjrsa}
For all $r \ge 2$, we have that $\alpha^*(K_r)$ is the unique real positive solution to the equation $x^r+rx-1=0$.
\end{conjecture}

Our main result confirms this conjecture and strengthens it from Hamiltonicity
to pancyclicity.  Let $\rho_r$ denote the unique positive solution of
$x^r+rx-1=0$.

\begin{theorem}\label{thm:clique}
For every integer $r\ge 2$, we have
\[
        \alpha^*(K_r)=\alpha_{\pan}^*(K_r)=\rho_r .
\]
\end{theorem}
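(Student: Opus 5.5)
Since $\alpha^*(K_r)\le\alpha_{\pan}^*(K_r)$ holds trivially, the plan is to prove a lower bound $\alpha^*(K_r)\ge\rho_r$ by an extremal construction, and an upper bound $\alpha_{\pan}^*(K_r)\le\rho_r$ by specializing a general $F$-factor framework to $F=K_r$.

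\emph{Lower bound.} Fix $\alpha<\rho_r$ and let $H$ be the complete bipartite-type graph with parts $A,B$, where $|A|=\alpha n$, $B$ is independent, and every vertex of $A$ is joined to all other vertices. Then $\delta(H)=\alpha n$. A Hamilton cycle of $H\cup\mathcal F$ has to enter and leave the independent set $B$, so we count the obstruction. Let $R$ be the subgraph induced on $B$ by the edges of $\mathcal F$. Contracting the components (paths) of a Hamilton cycle restricted to $B$ shows that we need at least as many vertices in $A$ as there are path pieces covering $B$. Each clique of $\mathcal F$ with $j$ vertices in $B$ gives a complete graph $K_j$ on those vertices, which can be covered by one path. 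Hence the number of pieces is at least the number of cliques meeting $B$, and a Hamilton cycle requires
\[
|A|\ \ge\ \#\{\text{cliques of }\mathcal F\text{ meeting }B\}.
\]
By concentration (for example, Azuma on the random ordering), this count is w.h.p. $(1+o(1))\frac nr\bigl(1-\alpha^r\bigr)$. The condition $\alpha n\ge \frac nr(1-\alpha^r)$ is equivalent to $\alpha^r+r\alpha-1\ge0$, which fails for $\alpha<\rho_r$. So w.h.p. there is no Hamilton cycle, and $\alpha^*(K_r)\ge\rho_r$.

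\emph{Upper bound.} Let $\delta(H)\ge(\rho_r+\varepsilon)n$. I would run the standard absorbing/connecting strategy in the following steps.

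\begin{enumerate}
\item Apply the regularity lemma to $H$ and split into two cases: either $H$ is close to the extremal graph above, or it is not.
\item In the non-extremal case, $H$ has robust expansion or contains a large structure, such as a connected matching in the reduced graph, covering more than half the vertices. Then the $K_r$-factor edges, which are random and spread across clusters, supply the few additional connections needed to close a cycle, much as in the $K_2$ argument of Espuny D\'iaz and Gir\~ao.
\item In the near-extremal case, with $A$ a set of high-degree vertices and $B$ nearly independent, argue directly. The cliques of $\mathcal F$ inside $B$ give paths. The count above, with slack $\varepsilon$, shows that the number of cliques meeting $B$ is at most $(1-\varepsilon')|A|$. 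Concentration results for the number of cliques with given intersection profiles, together with Hall-type matching arguments, then let us link the $B$-paths through distinct vertices of $A$ and absorb the leftover vertices of $A$ using the dense bipartite edges and the edges inside $A$.
\item For pancyclicity, show that short cycles of every length up to $\gamma n$ exist already in $H$, since $\delta(H)$ is linear and triangles or Bondy-type arguments apply. Then get all longer lengths by taking the Hamilton-cycle construction with a flexible absorber that can skip vertices one at a time. The clique edges inside $\mathcal F$ help here, since a $K_r$ permits shortcutting its path.
\end{enumerate}

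The main obstacle will be the near-extremal case. There the inequality must be tight to leading order: every random clique meeting $B$ must be exploited optimally, namely by traversing all of its $B$-vertices consecutively. The difficulty is that $H$ may have arbitrary structure inside the slightly non-independent $B$ and between $A$ and $B$. I would first handle this by cleaning: move atypical vertices, then greedily cover the exceptional vertices using $\varepsilon$-slack, and finish with a blow-up-lemma-type embedding of the path system.
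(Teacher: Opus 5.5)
Your lower bound is correct and is the paper's construction; the edges you add inside $A$ are harmless. One justification needs fixing. A Hamilton cycle cut at $A$ has at least as many pieces as there are factor cliques meeting $B$ because each piece inside $B$ uses only $\mathcal F$-edges and so stays within a single clique, not because a clique can be covered by one path.

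The upper bound, however, is not proved, and one step is false. In Step 4 you claim that $H$ itself contains cycles of all lengths up to $\gamma n$. But $H=K_{A,B}$ with $|A|=(\rho_r+\varepsilon)n$ satisfies the hypothesis (note $\rho_r<1/r\le 1/2$) and is bipartite, so it has no odd cycles at all. Short odd cycles must therefore use the random factor. The paper handles this as follows: for every $s\le an/2-1$ there are $\Omega(n^2)$ ordered pairs joined by an $H$-path of length exactly $s$, built greedily. W.h.p.\ some factor edge lands on such a pair, simultaneously for every $s$, which closes a cycle of length $s+1$.

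Steps 1--3 are only asserted. As stated, the dichotomy is false: two disjoint copies of $K_{n/2}$ are far from your extremal graph, yet they are not a robust expander and have no connected matching covering more than half the vertices. The near-extremal case, which you yourself flag as the crux, is deferred to unspecified Hall-type and blow-up arguments, exactly where the tight count must be turned into a cycle. As written, nothing establishes $\alpha^*_{\pan}(K_r)\le\rho_r$.

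The missing idea is already in your lower-bound count. Since $\mathcal F[U]$ is a disjoint union of cliques, $\ind(\mathcal F[U])=\pc(\mathcal F[U])$ equals the number of factor cliques meeting $U$. The same statistic therefore bounds the independence number of $H\cup\mathcal F$. If $I$ is independent and $v\in I$, then $I\subseteq C_v:=V\setminus N_H(v)$ with $|C_v|\le(1-a)n$. Concentration for the random ordering, plus a union bound over the $n$ sets $C_v$, gives w.h.p.\ $\ind(H\cup\mathcal F)\le\frac{n}{r}(1-a^r)+o(n)$. This lies below $an\le\delta(H)$ by a linear margin exactly when $a^r+ra-1>0$, i.e.\ $a>\rho_r$.

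For connectivity, fix $k$ with $a>1/(k+1)$. For $|W|<k$ the components of $H-W$ have linear size, and any two are joined by a factor edge with probability $1-e^{-\Omega(n)}$. A union bound over the $O(n^{k-1})$ sets $W$ gives $\kappa(H\cup\mathcal F)\ge k$. The Chv\'atal--Erd\H{o}s-type criterion of \cref{thm:FFGJM} then yields a Hamilton cycle, with no regularity, stability or absorption.

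For every length $\ell\ge C\log n$, apply the same argument to the prefix $\{v_1,\dots,v_\ell\}$ of the random ordering. W.h.p.\ this prefix inherits minimum degree at least $b\ell$ for some $b>\rho_r$, and it carries a uniformly random $K_r$-factor on all but fewer than $r$ of its vertices. Its failure probability $e^{-c\ell}$ is summable over $\ell\ge C\log n$.
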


The proof proceeds through a general framework for arbitrary connected $F$.
For a graph $G$, let $\ind(G)$ denote its independence number, and let $\pc(G)$
denote the minimum number of vertex-disjoint paths covering $V(G)$, where
isolated vertices are allowed as paths.  We use the conventions
$\ind(\emptyset)=\pc(\emptyset)=0$.   

For $x\in[0,1]$, let $S_x$ be the random subset of $V(F)$ obtained by including each vertex independently with probability $x$, and define
\begin{equation}\label{eq:exp-olynomials}
\Psi_F(x):=\E\,\pc(F-S_x),
\qquad
\Phi_F(x):=\E\,\ind(F-S_x),    
\end{equation}
where $F-S_x$ denotes the induced subgraph on $V(F)\setminus S_x$. 
As shown in \cref{prop:parameters}, the equations $\Psi_F(x)=mx$ and
$\Phi_F(x)=mx$ have unique solutions in $(0,1)$; we denote them by
$\tau_{\pc}(F)$ and $\tau_{\ind}(F)$, respectively.  Our general theorem is as
follows.

\begin{theorem}\label{thm:general}
Let $F$ be a connected graph on $m\ge2$ vertices.  Then
\[
        \tau_{\pc}(F)
        \le \alpha^*(F)
        \le \alpha_{\pan}^*(F)
        \le \tau_{\ind}(F).
\]
\end{theorem}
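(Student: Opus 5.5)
The middle inequality $\alpha^*(F)\le\alpha_{\pan}^*(F)$ is immediate, since pancyclic graphs are Hamiltonian. The two outer inequalities need separate arguments: an extremal construction for the lower bound, and an absorbing/rotation argument for the upper bound.

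\textbf{Lower bound: $\tau_{\pc}(F)\le\alpha^*(F)$.} Fix $a<\tau_{\pc}(F)$ and let $H$ be the complete split-type graph: an independent set $B$ of size $(1-a)n$, a clique $A$ of size $an$, and all $A$--$B$ edges. Then $\delta(H)\ge an$.

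A Hamilton cycle in $H\cup\Ffac$ restricted to $B$ is a disjoint union of paths using only $\Ffac$-edges inside $B$. These paths are separated by vertices of $A$, so their number is at most $|A|$. Inside each copy of $F$, the set $A$ is distributed like $S_x$ with $x\approx a$; it is hypergeometric rather than independent, but asymptotically it behaves like $S_a$. Hence the minimum number of paths covering $B$ is
\[
\sum_{\text{copies}}\pc(F-A)\approx \frac{n}{m}\Psi_F(a),
\]
and this is concentrated by McDiarmid/bounded differences over the random ordering.

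By \cref{prop:parameters}, $a<\tau_{\pc}$ gives $\Psi_F(a)>ma$, so w.h.p.\ $\frac{n}{m}\Psi_F(a)>an=|A|$. Therefore there is no Hamilton cycle. Taking $\varepsilon$ small pushes $a+\varepsilon$ below $\tau_{\pc}$, and the lower bound follows.

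\textbf{Upper bound: $\alpha_{\pan}^*(F)\le\tau_{\ind}(F)$.} Let $\delta(H)\ge(\tau_{\ind}+\varepsilon)n$. The plan has three steps.

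\emph{Step 1: structure of $H$.} Apply the regularity lemma to $H$. Alternatively, split by extremality: either $H$ has large expansion and is robust, or $H$ is close to the extremal split structure above.

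\emph{Step 2: reduce to linking paths.} Build a short absorbing/connecting structure in $H$ using its linear minimum degree; it should be able to link any two vertices by paths of any prescribed length in a range. Then the goal becomes covering $V$ by at most roughly $\varepsilon' n$ paths of $H\cup\Ffac$, which we link through reserved vertices.

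\emph{Step 3: the key counting.} Use the hypothesis $\Phi_F(x)<mx$ for $x>\tau_{\ind}$. Take a maximum-size path cover problem, or equivalently a Pósa-type / König–Egerváry argument. An obstruction to covering with few paths must come from a set $S$ whose removal leaves many components. The $\Ffac$-edges inside each copy reduce the number of independent "pieces" in $V\setminus S$ to about $\sum\ind(F-S)$ per copy. Combined with degrees $\ge(\tau_{\ind}+\varepsilon)n$, a Chvátal–Erdős-type condition should then hold in an auxiliary reduced graph whose vertices are the $F$-copies contracted along Hamilton paths of their pieces. Verifying this for \emph{every} $S$ simultaneously requires a union bound. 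That in turn needs concentration of $\sum_{\text{copies}}\ind(F-S)$ with failure probability $e^{-\Omega(n)}$, which is too weak against $2^n$ sets. So we must restrict to sets $S$ described by the regularity partition, i.e.\ a bounded number of clusters. I expect this step to be the main obstacle: handling arbitrary $S$ via the reduced-graph structure, in both the extremal and non-extremal cases.

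\emph{Pancyclicity.} Once Hamiltonicity is available with flexible absorbers, cycles of all lengths $\ell\ge\varepsilon n$ come from the same construction applied to subsets, and short cycles come from $H$ alone. Indeed, linear minimum degree gives all lengths $3,\dots,\varepsilon n$ by standard results (e.g.\ Bondy-type arguments or triangles plus regularity paths of every length).
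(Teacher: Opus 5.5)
Your middle inequality is trivial, and your lower bound is essentially the paper's argument; the clique on $A$ is harmless but unnecessary. The upper bound, however, is not proved. Step 3, which you flag yourself as the obstacle, carries the whole content. The regularity and absorber framework around it is not needed.

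\textbf{The missing idea for Hamiltonicity.} You never have to control $\sum_i \ind(F_i-S)$ for all $2^n$ sets $S$. If $I$ is independent in $H\cup\Ffac$ and $v\in I$, then $I\subseteq C_v:=V\setminus N_H(v)$ and $I$ is independent in $\Ffac[C_v]$, so $|I|\le\sum_i\ind(F_i[C_v])$. Concentration with failure probability $e^{-\Omega(n)}$ is therefore needed only for the $n$ sets $C_v$, each of size at most $(1-a)n$. Monotonicity of $\Phi_F$ then gives $\ind(H\cup\Ffac)\le(\Phi_F(a)/m+o(1))n\le\delta(H)-\eta n$ once $a>\tau_{\ind}(F)$.

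\textbf{Connectivity.} This must come from the random factor, not from $H$. Your Step 2 fails as stated: $H$ may be disconnected (e.g.\ two disjoint cliques of size $n/2$), so no linking structure inside $H$ exists. Instead, delete any $W$ with $|W|<k$. Every component of $H-W$ then has at least $an-k$ vertices, so there are $O(1)$ components. The factor joins any two disjoint linear-sized sets with probability $1-e^{-\Omega(n)}$, and a union bound over the $O(n^{k-1})$ choices of $W$ gives $\kappa(H\cup\Ffac)\ge k$.

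\textbf{Concluding.} A Chv\'atal--Erd\H{o}s-type criterion ($\kappa\ge k$, $\delta>(n+k^2-k-1)/(k+1)$, $\delta\ge\ind+k-2$) then applies directly to $H\cup\Ffac$ and yields a Hamilton cycle. No reduced graph, contraction, or extremal case split is required.

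\textbf{Pancyclicity.} Your claim that short cycles come from $H$ alone is false, because $H$ may be bipartite. For example, $K_{n/2,n/2}$ has minimum degree $n/2>\tau_{\ind}(F)n$ and no odd cycles. The inequality holds because $\ind(F-S)\le m-|S|$, with strict inequality at $S=\emptyset$, so $\Phi_F(1/2)<m/2$ and $\tau_{\ind}(F)<1/2$.

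Short cycles must instead be closed by one factor edge. For each $2\le s\le an/2-1$, a greedy argument shows at least $an^2/2$ ordered pairs are joined by an $H$-path of length exactly $s$. W.h.p.\ some factor edge lands on such a pair, by McDiarmid plus a union bound over $s$, giving a cycle of length $s+1$.

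For long cycles, ``the same construction applied to subsets'' needs a mechanism, since an arbitrary subset does not carry a random $F$-factor. The paper applies the Hamiltonicity lemma to prefixes $X_\ell$ of the random ordering. W.h.p.\ these have minimum degree at least $b\ell$ for some $b>\tau_{\ind}(F)$, and they carry a uniformly random $F$-factor on all but fewer than $m$ vertices. The failure probabilities $e^{-c\ell}$ are summable over $\ell\ge C\log n$, which covers all lengths not already handled by the short-cycle argument.
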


We first explain why the two functions in \eqref{eq:exp-olynomials} arise.
Suppose first that $H=K_{A,B}$ is the complete bipartite graph on $n$ vertices, where $|A|=(x+o(1))n$ and $B$ is the larger
part. If $H\cup\Ffac$ contains a Hamilton cycle, then deleting $A$ from that
cycle leaves at most $|A|$ vertex-disjoint paths covering $B$. Since $H$ has
no edges inside $B$, all these paths lie in $\Ffac[B]$, and hence
$\pc(\Ffac[B])\le |A|$. On the other hand, the concentration estimate in
\cref{lem:factor-set-concentration} gives
$\pc(\Ffac[B])=(\Psi_F(x)/m+o(1))n$. Thus the complete bipartite host is an
obstruction whenever $\Psi_F(x)>mx$, which accounts for the lower threshold
$\tau_{\pc}(F)$.

For the upper bound, assume that $\delta(H)\ge xn$ and let $I$ be an
independent set in $H\cup\Ffac$. Choosing $v\in I$ and writing
$C_v:=V(H)\setminus N_H(v)$, we have $I\subseteq C_v$ and
$|C_v|\le(1-x)n$. Uniform concentration over the sets $C_v$, together
with the monotonicity of $\Phi_F$ (proved in~\cref{prop:parameters}), yields
$|I|\le(\Phi_F(x)/m+o(1))n$. Thus, when $x>\tau_{\ind}(F)$, the independence number of
$H\cup\Ffac$ lies below its minimum degree by a linear margin.  A separate
joining estimate is then used to establish the vertex-connectivity required
by a Chv\'atal--Erd\H{o}s type Hamiltonicity result.

We conclude the introduction by briefly comparing our proof with the argument of Espuny D\'iaz and
Gir\~ao~\cite{DG23}. For the lower bound, we use the same unbalanced
complete bipartite obstruction, but the number of random matching
edges in the larger part is replaced by the path-cover statistic $\Psi_F$,
which applies to an arbitrary fixed connected
graph $F$. 
The upper-bound argument is structurally different from that
of~\cite{DG23}, whose proofs for $G_{n,1}$ and $G_{n,2}$ use an
absorbing path and iterative merging of a path--cycle system.  Our proof
does not construct an absorber.  Instead, concentration of $\Phi_F$ and a
joining estimate control, respectively, the independence number and the
connectivity of $H\cup\Ffac$, allowing us to apply a Chv\'atal--Erd\H{o}s result.  Short
cycles are obtained by closing deterministic paths with edges of the random
factor, and long cycles by applying the same Hamiltonicity argument to
suitable initial segments.  
This approach applies to every fixed
connected graph $F$.

The rest of the paper is organised as follows.  In \cref{sec:parameters} we
establish the elementary properties of $\Psi_F$ and $\Phi_F$ and deduce
\cref{thm:clique} from \cref{thm:general}.  In \cref{sec:random-factors} we
prove the probabilistic estimates for random $F$-factors used later.  The lower
bound in \cref{thm:general} is proved in \cref{sec:lower}.  The upper bound,
including pancyclicity, is proved in \cref{sec:upper}.  We conclude with
remarks on non-complete factors in \cref{sec:concluding}.

\section{The parameters}\label{sec:parameters}

In this section, we record the basic properties of the two polynomials
$\Psi_F$ and $\Phi_F$.  Throughout the section, $F$ is a connected graph on
$m\ge2$ vertices, and we write $V:=V(F)$.
We shall use the elementary inequality
\begin{equation}\label{eq:pc-le-ind}
        \pc(G)\le \ind(G)
\end{equation}
for every graph $G$.  Indeed, let $P_1,\ldots,P_q$ be a path cover of $G$ of
minimum size, so that $q=\pc(G)$.  Choose one endpoint $p_i$ from each path
$P_i$.  If $p_ip_j\in E(G)$ for some $i\ne j$, then, after reversing one of the
two paths if necessary, $P_i$ and $P_j$ can be joined through the edge
$p_ip_j$.  This gives a path cover with $q-1$ paths, a contradiction.   Therefore \(\{p_1,\ldots,p_q\}\) is independent, and
\eqref{eq:pc-le-ind} follows.

For every $x\in[0,1]$, applying \eqref{eq:pc-le-ind} to each induced subgraph \(F-S_x\) gives
\begin{equation}\label{eq:PsiPhi}
\Psi_F(x)\le \Phi_F(x).
\end{equation}  
Moreover, by the definition in~\eqref{eq:exp-olynomials},
we have 
\[
\Psi_F(x)
=
\sum_{S\subseteq V(F)}
\pc(F-S)x^{|S|}(1-x)^{m-|S|},
\quad
\Phi_F(x)
=
\sum_{S\subseteq V(F)}
\ind(F-S)x^{|S|}(1-x)^{m-|S|}.
\]

The next proposition records the monotonicity properties of
\(\Psi_F\) and \(\Phi_F\).  These properties ensure that the two threshold 
parameters $\tau_{\pc}(F)$ and $\tau_{\ind}(F)$ are well-defined.

\begin{proposition}\label{prop:parameters}
Let $F$ be a connected graph on $m\ge2$ vertices. Then the following hold.
\begin{enumerate}[label=\textup{(\roman*)}]
\item $\Phi_F(x)$ is non-increasing on $[0,1]$.
\item The function $\Delta_F(x):=mx-\Psi_F(x)$ is strictly increasing on \([0,1]\).
\item The equations $\Psi_F(x)=mx$  and $\Phi_F(x)=mx$ each have a unique solution in $(0,1)$. Moreover $\tau_{\pc}(F)\le \tau_{\ind}(F)$.
\end{enumerate}
\end{proposition}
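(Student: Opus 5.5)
Each part comes from a coupling of the sets $S_x$ together with one combinatorial inequality about deleting a single vertex.

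\emph{Part (i).} Couple the sets $S_x$ for all $x$ simultaneously. Take independent uniform variables $U_1,\dots,U_m$ on $[0,1]$ and set $S_x=\{i:U_i\le x\}$, so that $S_x\subseteq S_y$ whenever $x\le y$. Then $F-S_y$ is an induced subgraph of $F-S_x$, so $\ind(F-S_y)\le \ind(F-S_x)$ pointwise. Taking expectations gives $\Phi_F(y)\le\Phi_F(x)$.

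\emph{Part (ii).} The path-cover number is not monotone under vertex deletion. Instead we use a Lipschitz-type bound: for any graph $G$ and vertex $v$, $\pc(G)\le \pc(G-v)+1$, since adding $v$ as an isolated path to a cover of $G-v$ gives a cover of $G$. Write $\Psi_F$ as a polynomial and differentiate using the standard Russo-type formula for product measures:
\[
\Psi_F'(x)=\sum_{v\in V}\E\bigl[\pc(F-(S_x\cup\{v\}))-\pc(F-(S_x\setminus\{v\}))\bigr].
\]
For fixed $v$ and $S$, put $G=F-(S\setminus\{v\})$. Each summand equals $\pc(G-v)-\pc(G)\ge -1$, so $\Psi_F'(x)\ge -m$.

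This only yields $\Delta_F'(x)=m-\Psi_F'(x)\le 2m$, which points the wrong way. Strict increase needs $\Psi_F'(x)<m$, i.e.\ an upper bound on each increment. The main obstacle is therefore the inequality $\pc(G-v)\le\pc(G)+1$. It holds because deleting $v$ from a path in a cover splits that path into at most two paths, so the increment is at most $+1$ and $\Psi_F'(x)\le m$.

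Strictness needs one more observation. Some increment must be at most $0$ with positive probability whenever $x\in(0,1)$. Take $S=V\setminus\{v\}$: then $G$ is the single vertex $v$ and $\pc(G-v)-\pc(G)=0-1=-1$, an event of probability $x^{m-1}>0$. Hence $\Psi_F'(x)<m$ on $(0,1)$, and $\Delta_F$ is strictly increasing on $[0,1]$. Strict increase on the closed interval follows because the derivative vanishes at most at endpoints.

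\emph{Part (iii).} At the endpoints, $\Delta_F(0)=-\pc(F)<0$ and $\Delta_F(1)=m-0>0$. By (ii) and continuity, $\Psi_F(x)=mx$ has a unique root $\tau_{\pc}(F)$ in $(0,1)$.

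For $\Phi_F$, the function $mx-\Phi_F(x)$ is strictly increasing, as the sum of the strictly increasing $mx$ and the non-increasing $-\Phi_F$ from (i). Its endpoint values are $-\ind(F)<0$ and $m>0$, so it has a unique root $\tau_{\ind}(F)$ in $(0,1)$.

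Finally, by \eqref{eq:PsiPhi}, at $x=\tau_{\ind}(F)$ we get $\Delta_F(x)=mx-\Psi_F(x)\ge mx-\Phi_F(x)=0$. Since $\Delta_F$ is increasing with root $\tau_{\pc}(F)$, this gives $\tau_{\pc}(F)\le\tau_{\ind}(F)$.
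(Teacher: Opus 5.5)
Your proposal is correct and follows the paper's proof. In (ii), your Russo-type formula is the paper's derivative identity \eqref{eq:theta-derivative}. You use the same increment bound $\pc(G-v)\le\pc(G)+1$, and the same $-1$ term from $T=V\setminus\{u\}$ with weight $x^{m-1}$, giving $\Psi_F'(x)\le m-2mx^{m-1}<m$. Part (iii) is the same endpoint-and-monotonicity argument as in the paper.

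The only real difference is that you prove (i) by a monotone coupling of the sets $S_x$, where the paper uses the derivative identity; both are equally valid. The first bound you state in (ii), $\pc(G)\le\pc(G-v)+1$, is simply unnecessary.
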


\begin{proof}
We begin with a derivative identity. For any function \(h:2^V\to\mathbb R\),  define
\[
\Theta_h(x)
:=
\sum_{A\subseteq V}
h(A)x^{|A|}(1-x)^{m-|A|}.
\]
Since the sum is finite, differentiating term by term gives, for \(0<x<1\),
\[
\begin{aligned}
\Theta_h'(x)
&=
\sum_{A\subseteq V}
h(A)
\Bigl(
|A|x^{|A|-1}(1-x)^{m-|A|}
-
(m-|A|)x^{|A|}(1-x)^{m-|A|-1}
\Bigr)                                                   \\
&=
\sum_{A\subseteq V}
h(A)
\Bigl(
\sum_{u\in A}x^{|A|-1}(1-x)^{m-|A|}
-
\sum_{u\in V\setminus A}x^{|A|}(1-x)^{m-|A|-1}
\Bigr).
\end{aligned}
\]
We now rewrite the two double sums with the same index set.  For the positive
part, the summation is over all pairs \((A,u)\) with \(u\in A\).  Setting $T=A\setminus\{u\}$,
we obtain \(T\subseteq V\setminus\{u\}\), \(A=T\cup\{u\}\), and thus
\[
\sum_{A\subseteq V}\sum_{u\in A}
h(A)x^{|A|-1}(1-x)^{m-|A|}
=
\sum_{u\in V}
\sum_{T\subseteq V\setminus\{u\}}
h(T\cup\{u\})x^{|T|}(1-x)^{m-1-|T|}.
\]
For the negative part, the summation is over all pairs \((A,u)\) with
\(u\notin A\).  Setting \(T=A\), we have \(T\subseteq V\setminus\{u\}\), and
therefore
\[
\sum_{A\subseteq V}\sum_{u\in V\setminus A}
h(A)x^{|A|}(1-x)^{m-|A|-1}
=
\sum_{u\in V}
\sum_{T\subseteq V\setminus\{u\}}
h(T)x^{|T|}(1-x)^{m-1-|T|}.
\]
Combining the last two identities yields
\begin{equation}\label{eq:theta-derivative}
\Theta_h'(x)
=
\sum_{u\in V}
\sum_{T\subseteq V\setminus\{u\}}
\bigl(h(T\cup\{u\})-h(T)\bigr)
x^{|T|}(1-x)^{m-1-|T|}.
\end{equation}

We first apply \eqref{eq:theta-derivative} with $h(T):=\ind(F-T)$ for each $T\subseteq V$.
If \(u\notin T\), then \(F-(T\cup\{u\})\) is an induced subgraph of \(F-T\).
Hence $h(T\cup\{u\})\le h(T)$.
All weights in \eqref{eq:theta-derivative} are non-negative for
\(0<x<1\), and therefore \(\Phi_F'(x)\le0\) on \((0,1)\).  Since
\(\Phi_F\) is a polynomial, it follows that \(\Phi_F\) is non-increasing on
\([0,1]\).

Next apply \eqref{eq:theta-derivative} with $h(T):=\pc(F-T)$.
Fix \(T\subseteq V\) and \(u\in V\setminus T\).  Let \(\mathcal P\) be a
minimum path cover of \(F-T\).  Delete \(u\) from the path of \(\mathcal P\)
which contains it.  If \(u\) is an endpoint of that path, the number of paths
does not increase; if \(u\) is an internal vertex, that path is split into two
paths; and if \(u\) is an isolated vertex, the number of paths decreases by
one.  In all cases we obtain a path cover of \(F-(T\cup\{u\})\) with at most
one more path than \(\mathcal P\).  Hence $h(T\cup\{u\})\le h(T)+1$.
Using \eqref{eq:theta-derivative}, we get
\[
\Psi_F'(x)
\le
\sum_{u\in V}
\sum_{T\subseteq V\setminus\{u\}}
x^{|T|}(1-x)^{m-1-|T|}     
=
\sum_{u\in V}1
=m,
\]
where the inner sum is \(1\) by the binomial theorem.

We in fact need the strict form of this inequality.  For each \(u\in V\), take $T=V\setminus\{u\}$.
Then \(F-T\) is the one-vertex graph on \(u\), while
\(F-(T\cup\{u\})=\emptyset\).  Therefore $h(T\cup\{u\})-h(T)=0-1=-1$.
In the preceding upper bound this term was bounded by \(1\), and its weight is
\(x^{m-1}\).  Thus, for every \(0<x<1\),
\[
        \Psi_F'(x)
        \le
        m-2mx^{m-1}
        <
        m.
\]
Consequently,
\[
        \Delta_F'(x)=m-\Psi_F'(x)>0
        \qquad\text{for every }0<x<1.
\]
Since \(\Delta_F\) is continuous, it follows that \(\Delta_F\) is strictly
increasing on \([0,1]\).

It remains to prove the assertions about the two roots.  The function
$f_{\ind}(x):=mx-\Phi_F(x)$ is strictly increasing on $[0,1]$, because
$x\mapsto mx$ is strictly increasing and $\Phi_F$ is non-increasing.  Moreover
$f_{\ind}(0)=-\ind(F)<0$ and $f_{\ind}(1)=m>0$.  Hence $f_{\ind}$ has a unique
zero in $(0,1)$, which is precisely the unique solution of $\Phi_F(x)=mx$.
Similarly, $\Delta_F(0)=-\pc(F)<0$ and $\Delta_F(1)=m>0$, and since
$\Delta_F$ is continuous and strictly increasing, $\Psi_F(x)=mx$ has a unique
solution in $(0,1)$.

Finally, let $t=\tau_{\pc}(F)$.  Then $mt=\Psi_F(t)\le\Phi_F(t)$ by
\eqref{eq:PsiPhi}.  Since $f_{\ind}$ is strictly increasing and its unique zero
is $\tau_{\ind}(F)$, the inequality $f_{\ind}(t)\le0$ implies
$t\le\tau_{\ind}(F)$.  This proves the proposition.
\end{proof}

We now use \cref{thm:general} to prove \cref{thm:clique}.

\begin{proof}[Proof of \cref{thm:clique}]
Let $F=K_r$.  If $S\subsetneq V(F)$, then $F-S$ is a non-empty clique, and
therefore $\ind(F-S)=\pc(F-S)=1$; if $S=V(F)$, both quantities are zero.  Hence
$\Phi_{K_r}(x)=\Psi_{K_r}(x)=1-x^r$.  The common equation
$1-x^r=rx$ is equivalent to $x^r+rx-1=0$.  The polynomial $x^r+rx-1$ is
strictly increasing on $[0,\infty)$, negative at $0$, and positive at $1$, so
it has a unique positive root, denoted by $\rho_r$.  Thus
$\tau_{\pc}(K_r)=\tau_{\ind}(K_r)=\rho_r$.  By \cref{thm:general},
\[
\rho_r\le \alpha^*(K_r)\le\alpha^*_{\pan}(K_r)\le \rho_r,
\]
which proves the theorem.
\end{proof}

\section{Random \texorpdfstring{$F$}{F}-factors}\label{sec:random-factors}

In this section, we shall collect some elementary properties of random $F$-factors. Our main probabilistic tool is the following concentration inequality of McDiarmid (see \cite{McD98}). Throughout this section, for integers \(N\ge0\) and \(r\ge0\), we write
\((N)_r=N(N-1)\cdots(N-r+1)\), with \((N)_0=1\), for the falling factorial.

\begin{lemma}[McDiarmid's inequality for random permutations]\label{lem:mcdiarmid}
Let $\pi$ be a uniformly random permutation of an $n$-element set, and let
$X=X(\pi)$ be a real-valued function of $\pi$. Suppose that there is a
constant $c>0$ such that swapping two entries of $\pi$ changes the value of
$X$ by at most $c$. Then, for every $t>0$,
\[
        \Pr\bigl(|X-\mathbb E X|\ge t\bigr)
        \le
        2\exp\left(-\frac{2t^2}{c^2 n}\right).
\]
\end{lemma}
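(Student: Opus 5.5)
The statement is McDiarmid's inequality for uniformly random permutations. I would prove it by the standard martingale route: an exposure (Doob) martingale combined with the Azuma--Hoeffding inequality. The main care is needed in bounding the martingale differences.

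Write $\pi=(\pi(1),\ldots,\pi(n))$ and let $\mathcal G_i$ be the $\sigma$-algebra generated by $\pi(1),\ldots,\pi(i)$. Set $X_i:=\E[X\mid\mathcal G_i]$, so that $X_0=\E X$ and $X_n=X$. The key step is to show that
\[
|X_i-X_{i-1}|\le c
\]
almost surely for every $i$.

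Fix the values $\pi(1),\ldots,\pi(i-1)$, and let $a,b$ be two admissible values for $\pi(i)$. I would couple the conditional distributions given $\pi(i)=a$ and given $\pi(i)=b$ via the transposition $\sigma$ of the values $a$ and $b$. Composing with $\sigma$ is a bijection between permutations extending the prefix with $\pi(i)=a$ and those with $\pi(i)=b$. It preserves uniformity, and each pair in the coupling differs by swapping two entries. Hence
\[
\bigl|\E[X\mid\mathcal G_{i-1},\pi(i)=a]-\E[X\mid\mathcal G_{i-1},\pi(i)=b]\bigr|\le c .
\]
Consequently $X_i$ ranges, conditionally on $\mathcal G_{i-1}$, over an interval $[L_i,L_i+c]$ whose endpoint $L_i$ is $\mathcal G_{i-1}$-measurable. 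Since $X_{i-1}$ is an average of these values, it also lies in this interval.

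Next I would apply the Hoeffding form of Azuma's inequality. For a martingale whose increments lie conditionally in intervals of length $c_i$, it gives
\[
\Pr(|X_n-X_0|\ge t)\le 2\exp\Bigl(-\frac{2t^2}{\sum_i c_i^2}\Bigr).
\]
Its proof applies Hoeffding's lemma, $\E[e^{s D}\mid\mathcal G_{i-1}]\le e^{s^2c^2/8}$, to each difference $D=X_i-X_{i-1}$. One then multiplies these bounds, applies Markov's inequality, and optimises at $s=4t/(nc^2)$. With $c_i=c$ for each of the $n$ steps, this yields exactly the claimed bound $2\exp(-2t^2/(c^2n))$. The last step is trivial, with $c_n=0$, so it only helps.

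The main obstacle is getting the constant $2$ in the exponent rather than $1/2$. For this, the increment must be placed in an interval of length $c$ rather than bounded in absolute value by $c$. The coupling argument above gives precisely the range bound, since all conditional values lie within $c$ of each other. So I would state Hoeffding's lemma in its range form and check the conditional-measurability of $L_i$ carefully. Since the lemma is cited from McDiarmid~\cite{McD98}, the proof in the paper could alternatively just refer there.
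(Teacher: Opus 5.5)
Your proof is correct. The transposition-of-values coupling puts all conditional expectations $\E[X\mid\mathcal G_{i-1},\pi(i)=a]$ in an interval of length $c$, and Hoeffding's lemma in range form then gives exactly $2\exp(-2t^2/(c^2n))$; in fact you get $n-1$ in place of $n$, since the last step is trivial. The paper gives no proof of its own and simply quotes the inequality from McDiarmid~\cite{McD98}, so your Doob-martingale argument supplies a proof the paper omits, and it is the standard route to this bound.
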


The next lemma gives the concentration estimate for the sum of the independence
numbers, or of the path-cover numbers, over the components of a uniformly
random $F$-factor.

\begin{lemma}\label{lem:factor-set-concentration}
Let $F$ be a graph on $m$ vertices, and let $h\in\{\ind,\pc\}$.  For
$x\in[0,1]$, define
\[
        \Theta_h(x):=\sum_{S\subseteq V(F)}h(F-S)x^{|S|}(1-x)^{m-|S|}.
\]
Let $n$ be a sufficiently large integer divisible by $m$, and let
$\mathcal F=\{F_1,\dots,F_{n/m}\}$ be a uniformly random $F$-factor on an
$n$-vertex set $V$.  Let $\mathcal U$ be a deterministic family of subsets of
$V$ with $|\mathcal U|\le 2n$.  For $U\subseteq V$, put
$Z_h(U):=\sum_{i=1}^{n/m}h(F_i[U])$, where $F_i[U]$ denotes the subgraph of
$F_i$ induced by $V(F_i)\cap U$.  Then, for every fixed $\xi>0$, with
probability at least $1-\exp(-\Omega_{F,\xi}(n))$, every $U\in\mathcal U$
satisfies
\[
        \left|Z_h(U)-\frac{n}{m}\Theta_h\left(1-\frac{|U|}{n}\right)\right|
        \le \xi n.
\]
\end{lemma}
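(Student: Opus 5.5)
For a fixed $U\in\mathcal U$, the plan is to (a) compute $\E Z_h(U)$ and show it equals $\frac nm\Theta_h(1-|U|/n)$ up to $o(n)$, and (b) show $Z_h(U)$ is concentrated via \cref{lem:mcdiarmid}. A union bound over the at most $2n$ sets in $\mathcal U$ then finishes. Write $u=|U|$ and $x=1-u/n$, so that $S_i:=V(F_i)\setminus U$ is the set of "deleted" vertices of the $i$-th copy and $F_i[U]\cong F-S_i$ (via the identification of $F_i$ with $F$).

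For the expectation, by linearity and symmetry it suffices to consider a single block. Fix $S\subseteq V(F)$ with $|S|=s$. The probability that exactly the positions $S$ of the block $\{v_{im+1},\dots,v_{im+m}\}$ land outside $U$ is the hypergeometric-type quantity $(n-u)_s(u)_{m-s}/(n)_m$. Since $m$ is fixed, this equals $x^s(1-x)^{m-s}+O_F(1/n)$ uniformly in $u$. Summing over $S$ with weights $h(F-S)\le m$ gives
\[
\E Z_h(U)=\frac nm\Theta_h(x)+O_F(1).
\]

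For concentration, view $Z_h(U)$ as a function of the uniformly random ordering $\pi=(v_1,\dots,v_n)$. Swapping two entries of $\pi$ changes the vertex sets of at most two blocks and leaves all other summands unchanged. Each summand lies in $[0,m]$, so the change is at most $c=2m$. \Cref{lem:mcdiarmid} with $t=\xi n/2$ gives
\[
\Pr\bigl(|Z_h(U)-\E Z_h(U)|\ge \xi n/2\bigr)\le 2\exp\bigl(-\xi^2 n/(8m^2)\bigr).
\]
For $n$ large, the $O_F(1)$ error in the mean is at most $\xi n/2$. A union bound over $|\mathcal U|\le 2n$ sets yields failure probability at most $4n\exp(-\xi^2n/(8m^2))=\exp(-\Omega_{F,\xi}(n))$.

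The only mildly delicate step is the uniform approximation of the falling-factorial probability by $x^s(1-x)^{m-s}$ when $u$ or $n-u$ is small. Here I would note that
\[
(n-u)_s/n^s=\prod_{j<s}\Bigl(\frac{n-u}{n}-\frac jn\Bigr)=x^s+O(m/n)
\]
holds for all $u$, since each factor lies in $[0,1]$ and is perturbed by at most $m/n$. This holds even when $(n-u)_s=0$, because then $x\le m/n$ and $x^s=O(m/n)$. Finally, $(n)_m/n^m=1+O(m^2/n)$. All of this is routine, and nothing else seems to be an obstacle.
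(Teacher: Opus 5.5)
Your proposal is correct and follows the paper's proof essentially step for step. Both use the falling-factorial formula for the expectation with an $O_F(1)$ error, McDiarmid's inequality with $c=2m$ and $t=\xi n/2$, and a union bound over the at most $2n$ sets; your added justification of the uniform approximation is fine, except that summing the per-factor perturbations gives $O(m^2/n)$ rather than $O(m/n)$, which is immaterial since $m$ is fixed.
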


\begin{proof}
We first fix $U\subseteq V$ and compute the expectation of $Z_h(U)$.
Consider one fixed copy of \(F\), and let
\(\varphi:V(F)\to V\) be its labelled embedding.  The map \(\varphi\)
is uniformly distributed over all injections from \(V(F)\) into \(V\),
of which there are \((n)_m\).  For \(S\subseteq V(F)\), the event
\(\varphi^{-1}(V\setminus U)=S\) means that the vertices in \(S\) are
mapped to \(V\setminus U\), while the vertices in \(V(F)\setminus S\)
are mapped to \(U\).  There are
\((n-|U|)_{|S|}(|U|)_{m-|S|}\) such injections.  Hence this event has
probability
\[
\frac{(n-|U|)_{|S|}(|U|)_{m-|S|}}{(n)_m}.
\]
Therefore, by linearity of expectation,
\[
        \mathbb E Z_h(U)
        =
        \frac{n}{m}
        \sum_{S\subseteq V(F)}
        h(F-S)
        \frac{(n-|U|)_{|S|}(|U|)_{m-|S|}}{(n)_m}.
\]
Since $m$ is fixed, uniformly in $U$ and $S\subseteq V(F)$ we have
\[
        \frac{(n-|U|)_{|S|}(|U|)_{m-|S|}}{(n)_m}
        =
        \left(1-\frac{|U|}{n}\right)^{|S|}\left(\frac{|U|}{n}\right)^{m-|S|}
        +
        O\left(\frac1n\right).
\]
Hence
\begin{equation}\label{expe}
  \mathbb E Z_h(U)
        =
        \frac{n}{m}\Theta_h\left(1-\frac{|U|}{n}\right)+O(1).
\end{equation}

View
$Z_h(U)$ as a function of the random permutation $v_1,\dots,v_n$ used to
generate the $F$-factor.  If two entries of the permutation are swapped,  at most two
copies of $F$ are affected.
Moreover, for every induced subgraph $J$ of $F$, we have
$0\le h(J)\le m$.
Thus swapping two entries changes $Z_h(U)$ by at most $2m$.  Therefore, by
McDiarmid's inequality for random permutations, we obtain
\[
        \Pr\left(
        \left|Z_h(U)-\mathbb E Z_h(U)\right|>\frac{\xi n}{2}
        \right)
        \le
        2\exp\left(-\frac{\xi^2}{8m^2}n\right).
\]

By taking a union bound over all $U\in \mathcal U$, we have
\[
        \Pr\left(
        \exists\, U\in\mathcal U:
        \left|Z_h(U)-\mathbb E Z_h(U)\right|>\frac{\xi n}{2}
        \right)
        \le
        4n\exp\left(-\frac{\xi^2}{8m^2}n\right)  
        \le \exp(-\Omega_{F,\xi}(n)).
\]
Hence,  with probability at least
$1-\exp(-\Omega_{F,\xi}(n))$, we have $ \left|Z_h(U)-\mathbb E Z_h(U)\right|
        \le \xi n/2$
for every $U\in\mathcal U$.
Combining this with \eqref{expe},
we obtain,  for all $U\in\mathcal U$,
\[
        \left|
        Z_h(U)
        -
        \frac{n}{m}\Theta_h\left(1-\frac{|U|}{n}\right)
        \right|
        \le
        \xi n.
\]
This completes the proof.
\end{proof}

The next lemma shows that a uniformly random $F$-factor joins any two disjoint
linear-sized sets with exponentially high probability.

\begin{lemma}\label{lem:linear-pairs-joined}
Let $0<c<1$, and let $n$ be a sufficiently large integer divisible by $m$. Let
$V$ be an $n$-vertex set, and let $X,Y\subseteq V$ be disjoint subsets with
$|X|,|Y|\ge cn$. Let $\mathcal F$ be a uniformly random $F$-factor on $V$, where
$F$ is a non-empty graph on $m$ vertices. Then
\[
        \Pr\bigl(E_{\mathcal F}(X,Y)=\emptyset\bigr)
        \le \exp(-\Omega_{F,c}(n)).
\]
Here $E_{\mathcal F}(X,Y)$ denotes the set of edges of $\mathcal F$ with one
endpoint in $X$ and the other endpoint in $Y$.
\end{lemma}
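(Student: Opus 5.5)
Fix an edge $ab$ of $F$, which exists because $F$ is non-empty. The plan is to show that many of the $n/m$ copies of $F$ independently have a decent chance of placing $a$ in $X$ and $b$ in $Y$. A factor-of-$(1-p)^{\Omega(n)}$ bound would then finish the proof.

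The cleanest route is to reveal the random permutation $v_1,\dots,v_n$ block by block. Let $k:=\lfloor cn/(2m)\rfloor$, and consider the first $k$ copies $F_0,\dots,F_{k-1}$. Copy $F_i$ uses the positions $im+1,\dots,im+m$. Condition on the vertices placed in copies $F_0,\dots,F_{i-1}$; these occupy only $im\le cn/2$ vertices. So, given any history, at least $|X|-cn/2\ge cn/2$ vertices of $X$ and at least $cn/2$ vertices of $Y$ remain unused.

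Now, conditioned on the history, the vertex $v_{im+a}$ is uniform over the remaining $n-im$ vertices. Given that as well, $v_{im+b}$ is uniform over the remaining $n-im-1$ vertices. Hence the conditional probability that $v_{im+a}\in X$ and $v_{im+b}\in Y$ is at least
\[
\frac{cn/2}{n}\cdot\frac{cn/2}{n}=\frac{c^2}{4}.
\]
Whenever this happens, the edge $v_{im+a}v_{im+b}$ of $F_i$ lies in $E_{\mathcal F}(X,Y)$.

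Chaining these conditional probabilities gives
\[
\Pr\bigl(E_{\mathcal F}(X,Y)=\emptyset\bigr)\le\Bigl(1-\frac{c^2}{4}\Bigr)^{k}\le\exp\Bigl(-\frac{c^2}{4}\Bigl\lfloor\frac{cn}{2m}\Bigr\rfloor\Bigr)=\exp(-\Omega_{F,c}(n)).
\]

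The only point needing care is the sequential conditioning. The bound must hold uniformly over every history, which is why I restrict to the first $k$ blocks: that keeps at least $cn/2$ vertices of each of $X$ and $Y$ available throughout. The $-1$ in the denominator $n-im-1$ is harmless, since it only increases the conditional probability.
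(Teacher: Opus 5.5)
Your proof is correct, but it takes a different route from the paper's. The paper also fixes an edge $ab\in E(F)$. It then lets $M$ be the number of copies, among all $n/m$, whose $a$-vertex lies in $X$ and whose $b$-vertex lies in $Y$, and computes $\mathbb E M=\frac{n}{m}\cdot\frac{|X||Y|}{n(n-1)}=\Omega_{F,c}(n)$. Finally it applies McDiarmid's inequality for random permutations (a swap changes $M$ by at most $2$) to get $\Pr(M=0)\le\Pr(|M-\mathbb E M|\ge\mathbb E M)\le\exp(-\Omega_{F,c}(n))$.

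You instead expose the ordering block by block and multiply conditional probabilities, using a lower bound of $c^2/4$ that holds uniformly over all histories. Restricting to the first $\lfloor cn/(2m)\rfloor$ blocks is exactly what makes that uniformity hold. It also correctly handles the dependence between copies, despite the word ``independently'' in your opening sentence.

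What your route buys: it is fully elementary, needs no concentration inequality, and gives an explicit exponent of order $c^3n/m$, whereas the McDiarmid route gives order $c^4n/m^2$.

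What the paper's route buys: it proves more, namely that the number of $X$--$Y$ factor edges is concentrated around a linear mean. It is also a single template that the paper reuses for the counts $M_s$ in the short-cycle lemma, where the target set of ordered pairs is not a product set. Your sequential argument would adapt there too, since deleting the first few blocks removes only a small fraction of the $\Omega(n^2)$ good pairs, but that bookkeeping would have to be redone.
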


\begin{proof}
Since $F$ is non-empty, we may fix an edge $ab\in E(F)$.
Let $M$ be the number of copies of $F$ in $\mathcal F$ for which the
vertex corresponding to $a$ lies in $X$ and the vertex corresponding to
$b$ lies in $Y$.  For each copy, the ordered pair of vertices corresponding
to $a$ and $b$ is uniformly distributed over all ordered pairs of distinct
vertices of $V$.  Hence
\[
        \mathbb E M
        =
        \frac{n}{m}\cdot
        \frac{|X||Y|}{n(n-1)}=\Omega_{F,c}(n).
\]

Note that swapping two
entries of the permutation changes
$M$ by at most $2$. Since $|X|,|Y|\geq cn$, McDiarmid's inequality gives that for random
permutations,
\[
\Pr\left(|M-\mathbb E M|\ge \mathbb E M\right)
\le
\exp(-\Omega_{F,c}(n)).
\]
Therefore
\[
        \Pr\bigl(E_{\mathcal F}(X,Y)=\emptyset\bigr)
        \le
        \Pr(M=0)
         \le
        \Pr\left(|M-\mathbb E M|\ge \mathbb E M\right)
        \le \exp(-\Omega_{F,c}(n)). 
\]
\end{proof}

\section{The lower bound}\label{sec:lower}

We now prove the lower bound in \cref{thm:general}.  The construction is the
complete bipartite obstruction used for random perfect matchings in~\cite{DG23}, but the relevant local statistic is the path-cover number of the random induced subgraph of $F$.

\begin{proof}[Proof of the lower bound in \cref{thm:general}]
It suffices to show that for every $0\le x<\tau_{\pc}(F)$, there exists a graph
$H$ with minimum degree $xn$ such that w.h.p.~$H\cup\mathcal F$ is not
pancyclic, where $\mathcal F$ is a uniformly random $F$-factor.

Fix such an $x$. Since $x<\tau_{\pc}(F)$ and $\Delta_F(t)=mt-\Psi_F(t)$ is strictly increasing,
we have $\Psi_F(x)>mx$.  Moreover, $\pc(F-S)\le |V(F-S)|=m-|S|$ for every
$S\subseteq V(F)$, and therefore $\Psi_F(x)\le m(1-x)$.  Hence $x<1/2$.

Let $n$ tend to infinity through multiples of $m$.  Let $V=A\cup B$, where $|A|=\lfloor xn\rfloor$ and
$|B|=n-|A|$, and let $H:=K_{A,B}$. Then
$\delta(H)=|A|=\lfloor xn\rfloor$ since $|A|<|B|$.

Consider the subgraph $\mathcal F[B]$.  Since different copies of $F$ in the
factor are vertex-disjoint, the path-cover number is additive over the copies:
\[
\pc(\mathcal F[B])=
\sum_{F_i\in\mathcal F}\pc(F_i[B]).
\]
By \cref{lem:factor-set-concentration}, applied with $h=\pc$ and $U=B$, we have
w.h.p.
\[
\pc(\mathcal F[B])=
\left(\frac{\Psi_F(x)}{m}+o(1)\right)n.
\]
Since $\Psi_F(x)>mx$, it follows that w.h.p.~$\pc(\mathcal F[B])>|A|$.

Suppose now that $H\cup\mathcal F$ contains a Hamilton cycle $C$.  The graph
$H$ has no edges inside $B$, so all edges of the induced subgraph $C[B]$ lie in
$\mathcal F[B]$.  Deleting the vertices of $A$ from the cycle $C$ leaves at
most $|A|$ vertex-disjoint paths covering $B$, and all these paths use only
edges of $\mathcal F[B]$.  Thus $\pc(\mathcal F[B])\le |A|$, contradicting the
previous paragraph.  Therefore $H\cup\mathcal F$ is not Hamiltonian w.h.p., and
so $\alpha^*(F)\ge\tau_{\pc}(F)$.
\end{proof}

\section{The upper bound}\label{sec:upper}
The proof of the upper bound has two parts.  Short cycles are obtained by
closing deterministic paths with single random-factor edges. Long cycles are obtained by exposing the random factor through a random ordering and applying a Chv\'atal--Erd\H{o}s type Hamiltonicity result to each sufficiently long initial segment.

\subsection{Short cycles}

\begin{lemma}\label{lem:short-cycles}
Let $F$ be a fixed connected graph on $m\ge2$ vertices, and let $0<a<1$.  For
every sufficiently large integer $n\in m\mathbb N$ and every $n$-vertex graph
$H$ with $\delta(H)\ge an$, if $\Ffac$ is a uniformly random $F$-factor on
$V(H)$, then $H\cup\Ffac$ contains a cycle of every length
$3\le \ell\le an/2$ w.h.p.
\end{lemma}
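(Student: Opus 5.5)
Given $\delta(H)\ge an$, for each length $\ell$ with $3\le\ell\le an/2$ we aim to find many vertex pairs $(x,y)$ joined in $H$ by a path on exactly $\ell$ vertices. One such pair that is also an edge of $\Ffac$ closes a cycle of length $\ell$. The plan is to exhibit, for each $\ell$, two disjoint sets $X_\ell,Y_\ell$ of linear size such that every $x\in X_\ell$, $y\in Y_\ell$ are endpoints of an $H$-path on $\ell$ vertices. \cref{lem:linear-pairs-joined} then says $E_\Ffac(X_\ell,Y_\ell)\neq\emptyset$ with probability $1-\exp(-\Omega(n))$. A union bound over the at most $n$ values of $\ell$ finishes the proof, since we only need $|X_\ell|,|Y_\ell|\ge cn$ for some constant $c=c(a)>0$ independent of $\ell$.

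The key deterministic step is the construction of these sets. Greedily build a path $P=u_1u_2\cdots u_k$ in $H$ of length about $an/2$. Each new vertex is chosen among the neighbours of the current endpoint outside $P$; there are at least $an-|P|\ge an/2$ of them while $|P|\le an/2$. This works but gives only one pair of endpoints. A more flexible version is the following. Fix $P$ on $\ell-2$ vertices with $\ell-2\le an/2$, and put $X:=N_H(u_1)\setminus V(P)$ and $Y:=N_H(u_{\ell-2})\setminus V(P)$, each of size at least $an/2$. Any $x\in X$, $y\in Y$ with $x\ne y$ give the path $xu_1\cdots u_{\ell-2}y$ on $\ell$ vertices, and adding an $\Ffac$-edge $xy$ closes a cycle of length $\ell$. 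To apply \cref{lem:linear-pairs-joined} we need disjoint sets, but $X$ and $Y$ may overlap. So split $X\cup Y$: if $|X\setminus Y|\ge an/8$, take $X'=X\setminus Y$ and a half of $Y$. Otherwise $|X\cap Y|\ge 3an/8$; partition $X\cap Y$ into two halves and use them as $X'$ and $Y'$. In every case we get disjoint $X'\subseteq X$, $Y'\subseteq Y$ with $|X'|,|Y'|\ge an/16$, and \cref{lem:linear-pairs-joined} applies with $c=a/16$.

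For $\ell=3$ the path $P$ is a single vertex $u_1$, so $X=Y=N_H(u_1)$. The split above still works and gives a triangle $u_1xy$ whenever $xy\in\Ffac$. For $\ell=4$ one takes an edge $u_1u_2$. The requirement $\ell-2\le an/2$ holds for every $\ell\le an/2$, and the greedy construction of $P$ needs $|P|\le an/2$ so that a fresh neighbour always exists. Both bounds hold with room to spare, so the neighbourhoods outside $P$ keep size at least $an/2$.

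I expect no real obstacle here; the only care needed is the disjointness splitting and checking that the constant $c$ does not depend on $\ell$. One might also note that $\ell$ near $an/2$ is fine because $|V(P)|\le an/2$ still leaves $|X|,|Y|\ge an-an/2$. With failure probability $n\exp(-\Omega_{F,a}(n))=o(1)$ overall, $H\cup\Ffac$ contains all cycle lengths $3\le\ell\le an/2$ w.h.p.
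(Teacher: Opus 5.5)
Your proof is correct. It shares the paper's overall strategy: close an $H$-path with a single edge of the random factor, get an $\exp(-\Omega_{F,a}(n))$ failure bound for each length, and take a union bound over at most $n$ lengths. The difference is in how the good pairs are organised.

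The paper, for each $s$, takes the set $\mathcal P_s$ of all ordered pairs $(x,y)$ joined by an $H$-path of length exactly $s$. It shows $|\mathcal P_s|\ge an^2/2$ by growing a greedy path from every start vertex and counting at least $an/2$ choices for the final vertex. It then applies McDiarmid's inequality directly to the number $M_s$ of factor copies whose image of a fixed edge $pq\in E(F)$ lies in $\mathcal P_s$.

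You instead fix one path $P$ on $\ell-2$ vertices and use the product $X'\times Y'$ of two disjoint linear-size subsets of its end-neighbourhoods. This lets you quote \cref{lem:linear-pairs-joined} as a black box. The cost is the disjointness split, which you handle correctly, with $c=a/16$ independent of $\ell$; the case $\ell=3$, where $X=Y$, is also covered.

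The paper's formulation needs no product structure or disjointness, since any $\Omega(n^2)$ set of good ordered pairs would do. The price is repeating the McDiarmid computation already contained in the proof of \cref{lem:linear-pairs-joined}. Your version is more modular and needs only one deterministic path per length.
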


\begin{proof}
Fix an edge $pq\in E(F)$.  For an integer $s$ with $2\le s\le an/2-1$, let
$\mathcal P_s$ be the set of ordered pairs $(x,y)\in V(H)^2$ such that $H$
contains an $x$--$y$ path of length exactly $s$.

We first show that $|\mathcal P_s|\ge an^2/2$.  Fix $x\in V(H)$.  Since
$s-1<an\le\delta(H)$, we may greedily choose a path
$x=x_0,x_1,\ldots,x_{s-1}$ of length $s-1$ in $H$.  Every vertex
$y\in N_H(x_{s-1})\setminus\{x_0,\ldots,x_{s-2}\}$ extends this path to an
$x$--$y$ path of length $s$, and there are at least $an-(s-1)\ge an/2$ choices
for $y$.  Summing over all $x$ proves the claim.

Let $M_s$ be the number of copies of $F$ in $\Ffac$ for which the ordered pair
of vertices corresponding to $(p,q)$ belongs to $\mathcal P_s$.  For each copy,
this ordered pair is uniformly distributed over all ordered pairs of distinct
vertices of $V(H)$, and so $\E M_s\ge (n/m)(an^2/2)/(n(n-1))=\Omega_{F,a}(n)$.
Interchanging two entries in the random ordering used to generate $\Ffac$
changes $M_s$ by at most two.  By \cref{lem:mcdiarmid},
$\Prob(M_s=0)\le \exp(-\Omega_{F,a}(n))$.  Taking a union bound over all
$s\le an/2-1$, we have $M_s>0$ simultaneously for all such $s$ w.h.p.  Whenever
$M_s>0$, some random-factor edge $xy$ has its endpoints joined by an
$x$--$y$ path of length $s$ in $H$, and these edges form a cycle of length
$s+1$.  Hence every length $3\le \ell\le an/2$ occurs w.h.p.
\end{proof}

\subsection{Long cycles}

For the long-cycle part we use the following sufficient condition for Hamiltonicity due to
Faudree et al.~\cite{FFGJM10}. The same statement can also
be derived from a result of Ota~\cite{Ota-1995}, with the appropriate
interpretation of the condition involving \(\ind(G)\).  For a graph \(G\), let
\(\kappa(G)\) denote its connectivity.

\begin{lemma}[{\cite[Proposition~1]{FFGJM10}}]\label{thm:FFGJM}
Let $k\ge 2$ be an integer. Suppose that $G$ is a graph on $n$ vertices with
$\kappa(G)\ge k$ and $\delta(G)>(n+k^2-k-1)/(k+1)$. If
$\delta(G)\ge \ind(G)+k-2$, then $G$ contains a Hamilton cycle.
\end{lemma}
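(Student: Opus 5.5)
The plan is to run a longest-cycle argument of Chv\'atal--Erd\H{o}s type, reinforced by a Bondy-type degree-sum count. Since $\kappa(G)\ge k\ge2$, the graph $G$ is $2$-connected. Let $C$ be a longest cycle in $G$, with a fixed orientation, and write $u^+$ and $u^-$ for the successor and predecessor of $u$ on $C$. Suppose, for a contradiction, that $C$ is not Hamiltonian, and let $R$ be a component of $G-V(C)$.

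First I record the standard facts. Let $N_C(R)=\{u_1,\dots,u_t\}$ be the attachment set of $R$ on $C$.
\begin{enumerate}[label=\textup{(\alph*)}]
\item Since $\kappa(G)\ge k$, we have $t\ge k$. Here $|C|>k$ follows from $|C|\ge 2\delta$ (Dirac's bound for $2$-connected graphs) and the degree hypothesis.
\item By the maximality of $C$, the successors $u_1^+,\dots,u_t^+$ are pairwise nonadjacent and have no neighbours in $R$. Thus for every $x\in V(R)$ the set $\{x,u_1^+,\dots,u_t^+\}$ is independent.
\item In particular, if some $x\in V(R)$ satisfies $N(x)\subseteq V(C)$, then $\ind(G)\ge d(x)+1\ge\delta+1$. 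Since $k\ge2$, this contradicts $\delta\ge\ind(G)+k-2$.
\end{enumerate}
More generally, for any $x\in V(R)$ we get $\ind(G)\ge d_C(x)+1\ge \delta-|R|+2$, so small components are excluded outright.

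The substantive step is to show that some component $R$ is small enough for (c), or its refinement, to apply. For this I would use the hypothesis $(k+1)\delta>n+k^2-k-1$. Take $x\in V(R)$ and the independent set $X=\{x,u_1^+,\dots,u_k^+\}$ of size $k+1$. I would run the Bondy/Bauer--Veldman crossing count on the degrees of $X$. Vertices of $R$ other than $x$ are not adjacent to any $u_i^+$. Also, for $i\ne j$, the sets $N_C(u_i^+)$ and $N_C(u_j^+)^{+}$ (shifted along the appropriate segment) are disjoint, since otherwise one obtains a cycle longer than $C$. Counting gives
\[
\sum_{y\in X}d(y)\le n-|R|+d_R(x)+k^2-k-1+\bigl(\text{small correction}\bigr),
\]
with the $k^2-k$ term arising from the $k$ attachment vertices themselves. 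Combined with $\sum_{y\in X}d(y)\ge(k+1)\delta>n+k^2-k-1$, this should force $|R|-d_R(x)$ to be essentially $1$, that is, $x$ has almost no neighbours inside $R$. Choosing $x\in V(R)$ to minimise $d_R(x)$ then yields a vertex with $d_C(x)\ge\delta-O(1)$, and in fact $d_C(x)\ge \delta-k+2$.

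Given that, (b) applied to $N_C(x)$, rather than to the attachment set of all of $R$, produces an independent set of size $d_C(x)+1\ge\delta-k+3>\ind(G)$. This contradicts $\delta\ge\ind(G)+k-2$, so $C$ is Hamiltonian.

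The main obstacle is the crossing count in the second paragraph. One has to handle the segments of $C$ between consecutive attachment vertices carefully, so that the additive error is exactly $k^2-k-1$ and the bound closes. If this bookkeeping becomes unwieldy, my fallback is to cite the equivalent statement of Ota~\cite{Ota-1995}, which asserts Hamiltonicity under the degree-sum condition on independent $(k+1)$-sets together with the $\ind$-type condition. The lemma as stated then follows by substituting $\sigma_{k+1}(G)\ge(k+1)\delta$.
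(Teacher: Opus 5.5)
The paper does not prove this lemma: it quotes it from \cite{FFGJM10} and remarks that it also follows from \cite{Ota-1995}. Your fallback of citing Ota therefore matches how the paper treats it, provided you state Ota's hypotheses precisely enough to justify substituting $\sigma_{k+1}(G)\ge(k+1)\delta$. Your self-contained argument, however, has a genuine gap exactly where you suspect one.

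Steps (a)--(c) and the closing step are correct. If some $x\notin V(C)$ has $d_C(x)\ge\delta-k+2$, then $\{x\}\cup N_C(x)^+$ is independent of size at least $\delta-k+3>\ind(G)$. So everything hinges on finding a vertex $x$ in a component $R$ of $G-V(C)$ with $d_R(x)\le k-2$, and the crossing count does not deliver this. As stated, with $x\in V(R)$ arbitrary and a bounded correction, your inequality is false. Take $G=K_{s,s+1}$ with $s\ge k+1$. It is $k$-connected, satisfies $(k+1)\delta>n+k^2-k-1$ (this is equivalent to $s>k$), and is not Hamiltonian. A longest cycle misses a single vertex $x$ of the larger side, so $R=\{x\}$, and every $u_i^+$ lies in the larger side. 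Hence $\sum_{y\in X}d(y)=(k+1)s$, while your right-hand side is $2s+k^2-k-1+O(1)$; the difference $(k-1)(s-k)+1-O(1)$ is unbounded. The error on $C$ is therefore not a fixed quantity of order $k^2$. A degree count on $X$ alone cannot distinguish small components from large ones, and the lemma is false without the $\ind$ hypothesis.

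Even granting the inequality, the inference you draw from it is reversed. If $|R|-d_R(x)$ is essentially $1$, then $x$ is adjacent to almost \emph{all} of $R$, not to almost none of it. Minimising $d_R(x)$ then gives nothing better than the trivial bound $d_C(x)\ge\delta-|R|+1$, which is useless when $R$ is large. Moreover, with a correction of at most $1$, the inequality would contradict $|R|-d_R(x)\ge1$ outright, proving Hamiltonicity without ever using $\delta\ge\ind(G)+k-2$; the example $K_{s,s+1}$ shows this is impossible.

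What is really needed is a structural statement about longest cycles that produces the required vertex. For instance, it would suffice that $G-V(C)$ contains no path on $k$ vertices: then an endpoint $x$ of a longest path in a component $R$ satisfies $d_R(x)\le k-2$, and your step (b) finishes. Establishing something of this kind from $(k+1)\delta\ge n+k^2-k$ (and possibly the $\ind$ hypothesis) needs a careful analysis of long paths inside $R$ and of how their ends attach to the segments of $C$. That is where all the difficulty of the lemma lies, and the proposal does not supply it.
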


The next lemma converts the above deterministic criterion into a probabilistic
Hamiltonicity statement for a dense host plus a random \(F\)-factor.  The small
exceptional set \(B\) is included, since an initial segment of a
random ordering need not have size divisible by \(m\) later.  Thus, the random
\(F\)-factor covers all but fewer than \(m\) vertices, while the host graph
still spans all \(n\) vertices.

\begin{lemma}\label{lem:truncated-hamilton}
Let \(F\) be a connected graph on \(m\ge2\) vertices, and let
\(a>\tau_{\ind}(F)\). There exists \(c=c(F,a)>0\) such that the following holds
for every sufficiently large integer \(n\). Let \(G\) be an \(n\)-vertex graph
with \(\delta(G)\ge an\), and let \(B\subseteq V(G)\) satisfy \(|B|<m\) and
\(m\mid(n-|B|)\). If \(\Ffac'\) is a uniformly random \(F\)-factor on
\(V(G)\setminus B\), then
\[
\Prob(G\cup\Ffac'\text{ is Hamiltonian})\ge 1-\exp(-cn).
\]
\end{lemma}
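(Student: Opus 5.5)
We apply \cref{thm:FFGJM} to \(G^*:=G\cup\Ffac'\) with a large constant \(k=k(F,a)\). Set \(\eta:=(ma-\Phi_F(a))/(2m)>0\), which is positive because \(a>\tau_{\ind}(F)\) and \(mx-\Phi_F(x)\) is strictly increasing (\cref{prop:parameters}). Then fix an integer \(k\ge2\) with \(1/(k+1)<a/2\). For large \(n\) this gives \(\delta(G^*)\ge an>(n+k^2-k-1)/(k+1)\). It remains to show that, with probability \(1-\exp(-cn)\), we have \(\ind(G^*)\le an-k+2\) and \(\kappa(G^*)\ge k\).

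\emph{Independence number.} Take \(\mathcal U:=\{C_v:v\in V(G)\}\), where \(C_v:=(V(G)\setminus B)\setminus N_G(v)\). This is a deterministic family of at most \(n\) subsets of the \(n'\)-vertex set \(V(G)\setminus B\), with \(n'=n-|B|\). Any independent set \(I\) of \(G^*\) with \(v\in I\) satisfies \(I\setminus B\subseteq C_v\). Moreover \(I\cap C_v\) is independent in \(\Ffac'[C_v]\), so
\[
|I|\le |B|+\sum_i \ind(F_i[C_v])=|B|+Z_{\ind}(C_v).
\]
We apply \cref{lem:factor-set-concentration} on \(V(G)\setminus B\) with \(h=\ind\) and small \(\xi\). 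With probability \(1-\exp(-\Omega(n))\) this gives
\[
Z_{\ind}(C_v)\le \frac{n'}{m}\Phi_F\bigl(1-|C_v|/n'\bigr)+\xi n.
\]
Since \(|C_v|\le n-an\), we get \(1-|C_v|/n'\ge a-o(1)\). Because \(\Phi_F\) is non-increasing and continuous, \(\ind(G^*)\le (\Phi_F(a)/m+2\xi)n+m\le (a-\eta)n\) for small \(\xi\) and large \(n\). This is far below \(an-k+2\).

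\emph{Connectivity.} Suppose \(S\) is a separator with \(|S|<k\), splitting \(G^*-S\) into \(X\) and \(Y\) with no edges between them. Every vertex of \(X\) has at least \(an-k\) neighbours in \(G\), all inside \(X\cup S\), so \(|X|,|Y|\ge an/2\). The obstacle is the union bound: the sets \(X\) and \(Y\) depend on \(\Ffac'\), so we must reduce to a deterministic family before applying \cref{lem:linear-pairs-joined}. The plan is to use the structure of \(G\). Let \(C_1,\dots,C_t\) be the vertex sets of the components of the graph \(G_\gamma\) on \(V(G)\) in which \(u\sim w\) when \(|N_G(u)\cap N_G(w)|\ge\gamma n\), for a small \(\gamma\).

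First, each component is internally robust. Two vertices \(u,w\) adjacent in \(G_\gamma\) have at least \(\gamma n>k\) common neighbours, so they cannot be separated by \(S\). Hence each \(C_j\setminus S\) lies entirely on one side.

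Second, the number of components is bounded. Vertices in different components have almost disjoint neighbourhoods, each of size at least \(an\), so \(t\le 1/a+1\). Each component also has linear size, since it contains a vertex's neighbourhood up to \(\gamma n\) errors. Making this precise (choosing \(\gamma\) depending on \(a\), and handling vertices whose neighbourhoods straddle components) is where I expect the real work.

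Granting these two facts, \(X\) and \(Y\) are unions of the sets \(C_j\), up to \(S\). There are at most \(2^{t}\) bipartitions \(\{\mathcal X,\mathcal Y\}\) of the components. For each one, \cref{lem:linear-pairs-joined} applied to \(\bigcup\mathcal X\setminus B\) and \(\bigcup\mathcal Y\setminus B\) shows that an \(\Ffac'\)-edge joins them with probability \(1-\exp(-\Omega(n))\). We want such edges to avoid any \(S\) of size less than \(k\). So we use the stronger count that the number \(M\) of such factor edges is linear, which follows from the concentration in the proof of \cref{lem:linear-pairs-joined}, and hence exceeds \(2mk\). Since \(S\) meets at most \(k\) copies of \(F\), some joining edge survives, a contradiction. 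A union bound over the bounded number of bipartitions then gives \(\kappa(G^*)\ge k\).

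With all three hypotheses of \cref{thm:FFGJM} verified, \(G^*\) is Hamiltonian with probability at least \(1-\exp(-cn)\).
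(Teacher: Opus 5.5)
Your bounds on the independence number and the minimum degree are fine and match the paper; it picks $b$ with $\tau_{\ind}(F)<b<a$ instead of using continuity of $\Phi_F$, which makes no difference. The gap is in the connectivity step, and it is more than a matter of making things precise: your first ``fact'' is false. Adjacency in $G_\gamma$ keeps two vertices $u,w\notin S$ on the same side. But two vertices of $C_j\setminus S$ may be linked in $G_\gamma$ only through vertices of $S$. For example, let $G$ be two disjoint cliques $P_1,P_2$ on $(n-1)/2$ vertices each, plus a vertex $s$ adjacent to everything else. Then $\delta(G)=(n-1)/2$, so $G$ is admissible for $F=K_2$ and any $a\in(\sqrt2-1,1/2)$. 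Every vertex shares about $n/2$ neighbours with $s$, so $G_\gamma$ is connected and $t=1$. With $S=\{s\}$, your first fact puts all of $V(G)\setminus\{s\}$ on one side. Its justification uses only edges of $G$, so it would equally show that $G-s$ is connected, which it is not. Separately, your justification of linear component size is also wrong: in $K_{A,B}$ the neighbourhood of $u\in A$ lies entirely in the other $G_\gamma$-component. Linear size does follow from $\delta(G_\gamma)\ge(a^2-\gamma)n$, but that does not rescue the first fact.

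The detour is also unnecessary. The obstacle you identify disappears if you union bound over the separator rather than over the pair $(X,Y)$. There are only $O(n^{k-1})$ sets $S$ with $|S|<k$, while \cref{lem:linear-pairs-joined} has failure probability $\exp(-\Omega(n))$. For fixed $S$, the components of $G-S$ are deterministic. Each has at least $an-k+2$ vertices, hence at least $an/2$ outside $B$, and there are at most $2/a$ of them. Every separation $(X,Y)$ of $(G\cup\Ffac')-S$ is a union of components of $G-S$. So it suffices that every pair $D_1,D_2$ of such components has an $\Ffac'$-edge between $D_1\setminus B$ and $D_2\setminus B$. Such an edge automatically avoids $S$, so you do not need the count $M>2mk$ either. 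A union bound over all $S$ and all pairs of components gives $\kappa(G\cup\Ffac')\ge k$ with probability $1-\exp(-\Omega(n))$. This is exactly the paper's argument.
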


\begin{proof}
We may assume that $a<1$. Put
$n_B:=n-|B|$ and $V_B:=V(G)\setminus B$.  Choose $b$ such that
$\tau_{\ind}(F)<b<a$.  Since $x\mapsto mx-\Phi_F(x)$ is positive at $b$, we
have $\Phi_F(b)/m<b<a$.  Choose $\eta>0$ such that
\begin{equation}\label{eq:eta-choice}
        \frac{1}{m}\Phi_F(b)\le a-3\eta.
\end{equation}

For each $v\in V(G)$, let $C_v:=V(G)\setminus N_G(v)$ and
$C_v^B:=C_v\cap V_B$.  Since $\delta(G)\ge an$, we have
$|C_v^B|\le |C_v|\le (1-a)n$.  As $n_B=n-O_F(1)$ and $b<a$, for all
sufficiently large $n$ we have
\begin{equation}\label{eq:C-density}
1-\frac{|C_v^B|}{n_B}\ge b.
\end{equation}
For $v\in V(G)$, set $Z_v:=\sum_{F_i\in\Ffac'}\ind(F_i[C_v^B])$.  The family
$\{C_v^B:v\in V(G)\}$ has size at most $n\le 2n_B$ for large $n$.  Applying
\cref{lem:factor-set-concentration}, with $h=\ind$, to the random
\(F\)-factor on $V_B$, we obtain with probability at least
$1-\exp(-\Omega_{F,a}(n))$ that, for every $v\in V(G)$,
\[
        Z_v
        \le \frac{n_B}{m}\Phi_F\left(1-\frac{|C_v^B|}{n_B}\right)+\eta n_B
        \le \frac{n_B}{m}\Phi_F(b)+\eta n_B
        \le (a-2\eta)n.
\]
Here the second inequality uses \eqref{eq:C-density} and the monotonicity of
$\Phi_F$, and the last follows from \eqref{eq:eta-choice} and $n_B\le n$.
On this event, every independent set $I$ in $J:=G\cup\Ffac'$ has size at most
$(a-\eta)n$.  Indeed, if $I\ne\emptyset$, choose $v\in I$.  Since $I$ is
independent in $G$, we have $I\subseteq C_v$.  Moreover, $I\cap V_B$ is
independent in the disjoint union $\Ffac'[C_v^B]$, and so
\begin{equation}\label{eq:ind-bound}
        |I|\le Z_v+|B|\le (a-\eta)n
\end{equation}
for all sufficiently large $n$, since $|B|<m$ is fixed while $\eta n\to\infty$.

It remains to prove the required connectivity.  Choose an integer $k\ge2$ such
that $a>1/(k+1)$.  Fix $W\subseteq V(G)$ with $|W|\le k-1$, and let $D$ be a
component of $G-W$.  For any $u\in D$, all neighbours of $u$ in $G$ lie in
$D\cup W$, and therefore $|D|\ge an-|W|+1\ge an-k+2$.  Thus every component
$D$ of $G-W$ satisfies $|D\cap V_B|\ge an-k+2-(m-1)\ge an/2$ for large $n$, and
the number of components is bounded in terms of $a$ and $k$ only.

For any two distinct components $D_1,D_2$ of $G-W$, \cref{lem:linear-pairs-joined}
applied inside $V_B$ gives
\[
        \Prob\bigl(E_{\Ffac'}(D_1\cap V_B,D_2\cap V_B)=\emptyset\bigr)
        \le \exp(-\Omega_{F,a}(n)).
\]
A union bound over all $O(n^{k-1})$ choices of $W$ and all pairs of components
shows that, with probability at least $1-\exp(-\Omega_{F,a,k}(n))$, the graph
$J-W$ is connected for every $W\subseteq V(G)$ with $|W|\le k-1$.  Hence
\begin{equation}\label{eq:connectivity}
        \kappa(J)\ge k.
\end{equation}

Combining \eqref{eq:ind-bound} and \eqref{eq:connectivity}, and using the
choice of $k$, we have for all sufficiently large $n$ that
$\delta(J)\ge an>(n+k^2-k-1)/(k+1)$ and
$\delta(J)\ge an\ge \ind(J)+k-2$.  By \cref{thm:FFGJM}, $J$ is Hamiltonian.
The two exceptional events above have probability at most $\exp(-cn)$ after
decreasing $c=c(F,a)$ if necessary.
\end{proof}

We now use \cref{lem:truncated-hamilton} to find Hamilton cycles in suitable
initial segments of the random ordering.  This yields cycles of prescribed
lengths, while the exceptional set in \cref{lem:truncated-hamilton} handles
the case where the segment length is not divisible by \(m\).

\begin{lemma}\label{lem:long-cycles}
Let $F$ be a connected graph on $m\ge2$ vertices, and let
$a>\tau_{\ind}(F)$.  There is a constant $C=C(F,a)>0$ such that the following
holds.  For every sufficiently large integer $n\in m\mathbb N$ and every
$n$-vertex graph $H$ with $\delta(H)\ge an$, if $\Ffac$ is a uniformly random
$F$-factor on $V(H)$, then $H\cup\Ffac$ contains a cycle of every length
$C\log n\le \ell\le n$ w.h.p.
\end{lemma}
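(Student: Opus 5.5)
Generate $\Ffac$ from a uniformly random ordering $v_1,\ldots,v_n$ of $V(H)$, as in the definition. For each length $\ell$ with $C\log n\le\ell\le n$, let $V_\ell:=\{v_1,\ldots,v_\ell\}$, write $\ell=qm+r$ with $0\le r<m$, and set $B_\ell:=\{v_{qm+1},\ldots,v_\ell\}$. The copies of $F$ on the blocks $\{v_{im+1},\ldots,v_{im+m}\}$ with $i<q$ then form a uniformly random $F$-factor $\Ffac'_\ell$ on $V_\ell\setminus B_\ell$. This holds already conditionally on the set $V_\ell$, because given $V_\ell$ the order of its first $qm$ elements is uniform. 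Since $\Ffac'_\ell\subseteq\Ffac$, a Hamilton cycle of $H[V_\ell]\cup\Ffac'_\ell$ is a cycle of length $\ell$ in $H\cup\Ffac$. The plan is therefore to apply \cref{lem:truncated-hamilton} to $G:=H[V_\ell]$ with exceptional set $B_\ell$ and a slightly smaller density $a'$, where $\tau_{\ind}(F)<a'<a$.

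The first step is a minimum-degree estimate. Since $V_\ell$ is a uniformly random $\ell$-subset of $V(H)$, the number $|N_H(v)\cap V_\ell|$ is hypergeometric with mean at least $a\ell$ for each $v$. Chernoff bounds for the hypergeometric distribution give
\[
\Prob\bigl(|N_H(v)\cap V_\ell|<a'\ell\bigr)\le \exp(-c_1\ell)
\]
with $c_1=c_1(a,a')>0$. A union bound over $v$ shows that $\delta(H[V_\ell])\ge a'\ell$ fails with probability at most $n\exp(-c_1\ell)$. A vertex $v\notin V_\ell$ does not matter, but bounding over all $v$ is harmless.

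Next, conditional on $V_\ell$ with $\delta(H[V_\ell])\ge a'\ell$, and since $\ell\ge C\log n$ is large, \cref{lem:truncated-hamilton} with parameter $a'$ and $c=c(F,a')$ shows that $H[V_\ell]\cup\Ffac'_\ell$ is non-Hamiltonian with probability at most $\exp(-c\ell)$. Summing over $\ell$, the total failure probability is at most
\[
\sum_{\ell\ge C\log n}\bigl(n e^{-c_1\ell}+e^{-c\ell}\bigr)\le n^2e^{-\min(c,c_1)C\log n}+o(1)=o(1),
\]
provided $C>3/\min(c,c_1)$.

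The main obstacle is that \cref{lem:truncated-hamilton} is stated only for sufficiently large $n$, and here it is applied with $\ell$ in the role of $n$. We need its error bound $\exp(-c\ell)$ to hold uniformly once $\ell\ge\ell_0(F,a')$, which is true because all constants there depend only on $F$ and $a'$. Taking $n$ large ensures $C\log n\ge\ell_0$. One must also check that the hypergeometric concentration and the conditional-uniformity claim are valid simultaneously. This is fine because we first condition on the set $V_\ell$ (which fixes $G$) and only then use the randomness of the internal order.
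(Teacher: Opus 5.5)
Your proposal is correct and follows the paper's proof essentially step for step: a random ordering, initial segments with the incomplete block as the exceptional set, and a hypergeometric Chernoff bound giving $\delta(H[V_\ell])\ge a'\ell$ for an intermediate density $\tau_{\ind}(F)<a'<a$. Then \cref{lem:truncated-hamilton} is applied, and $e^{-c\ell}$ is summed over $\ell\ge C\log n$. The only refinement is that the paper conditions on both $V_\ell$ and $B_\ell$, since only then is $\Ffac'_\ell$ a uniform $F$-factor on a fixed vertex set; your bound conditional on $V_\ell$ alone then follows by averaging over $B_\ell$, because \cref{lem:truncated-hamilton} holds for every admissible exceptional set $B$.
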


\begin{proof}
Generate $\Ffac$ from a uniformly random ordering $v_1,\ldots,v_n$ of $V(H)$.
For each $\ell\in[n]$, let $X_\ell:=\{v_1,\ldots,v_\ell\}$.  Choose
$b$ with $\tau_{\ind}(F)<b<a$.  By the hypergeometric Chernoff bound, there is
$\gamma=\gamma(a,b)>0$ such that, for every fixed vertex $v\in V(H)$ and every
$\ell$,
\[
        \Prob\bigl(|N_H(v)\cap X_\ell|<b\ell\bigr)
        \le \exp(-\gamma\ell).
\]
Taking $C$ sufficiently large and applying a union bound over all vertices $v$
and all $\ell\ge C\log n$, we obtain the event
\begin{equation}\label{eq:prefix-degree}
        \delta(H[X_\ell])\ge b\ell
        \qquad\text{for every }\ell\ge C\log n
\end{equation}
with probability $1-o(1)$.

Fix $\ell\ge C\log n$, and write $\ell=qm+r$, where $0\le r<m$.  Put
$B_\ell:=\{v_{qm+1},\ldots,v_{qm+r}\}$ and $X'_\ell:=X_\ell\setminus B_\ell$.
Conditioned on the two sets $X_\ell$ and $B_\ell$, the first $q$ complete
blocks in the random ordering form a uniformly random $F$-factor on $X'_\ell$.
For every fixed choice of $X_\ell$ and $B_\ell$ satisfying
$\delta(H[X_\ell])\ge b\ell$, \cref{lem:truncated-hamilton}, applied to the
$\ell$-vertex graph $H[X_\ell]$ with exceptional set $B_\ell$ and with $a$
replaced by $b$, gives
\[
        \Prob\bigl(H[X_\ell]\cup\Ffac[X_\ell]
        \text{ is not Hamiltonian}\mid X_\ell,B_\ell\bigr)
        \le \exp(-c_0\ell),
\]
where $c_0=c(F,b)>0$.  Indeed, the first $q$ complete blocks give the random
factor required by \cref{lem:truncated-hamilton}; the possible additional edges
inside the incomplete block can only help.

Let $\mathcal D_\ell$ be the event that $\delta(H[X_\ell])\ge b\ell$, and let
$\mathcal H_\ell$ be the event that $H[X_\ell]\cup\Ffac[X_\ell]$ is
Hamiltonian.  The conditional estimate above implies
$\Prob(\mathcal D_\ell\cap\mathcal H_\ell^c)\le \exp(-c_0\ell)$.  Increasing
$C$, if necessary, ensures that $\sum_{\ell\ge C\log n}\exp(-c_0\ell)=o(1)$.
Thus, with high probability, \eqref{eq:prefix-degree} holds and
$\mathcal H_\ell$ holds for every $\ell\ge C\log n$.

For each such $\ell$, the Hamilton cycle in $H[X_\ell]\cup\Ffac[X_\ell]$ is a
cycle of length $\ell$ in $H\cup\Ffac$.  This completes the proof.
\end{proof}

\begin{proof}[Proof of the upper bound in \cref{thm:general}]
Fix $a>\tau_{\ind}(F)$.  Let $H$ be an $n$-vertex graph with
$\delta(H)\ge an$, where $n$ is sufficiently large and divisible by $m$, and let
$\Ffac$ be a uniformly random $F$-factor on $V(H)$.  By
\cref{lem:short-cycles}, $H\cup\Ffac$ contains cycles of all lengths
$3\le \ell\le an/2$ w.h.p.  By \cref{lem:long-cycles}, it contains cycles of
all lengths $C\log n\le \ell\le n$ w.h.p.  Since $C\log n\le an/2$ for all
sufficiently large $n$, these two intervals cover every length $3,4,\ldots,n$.
Thus $H\cup\Ffac$ is pancyclic w.h.p.  As $a>\tau_{\ind}(F)$ was arbitrary,
$\alpha_{\pan}^*(F)\le\tau_{\ind}(F)$.
\end{proof}

Together with the lower bound from \cref{sec:lower} and the trivial inequality
$\alpha^*(F)\le\alpha_{\pan}^*(F)$, this completes the proof of
\cref{thm:general}.

\section{Concluding remarks}\label{sec:concluding}

It would be interesting to see if our techniques allow for the study
of Hamiltonicity in a deterministic graph perturbed by non-complete connected graphs.
\cref{thm:general} determines the exact threshold for clique factors. However, for non-complete connected graphs, the interval in \cref{thm:general} is genuinely
non-degenerate.

\begin{proposition}\label{prop:strict-gap}
Let $F$ be connected. Then $\tau_{\pc}(F)=\tau_{\ind}(F)$ if and only if $F$ is complete. In particular, if $F$ is connected and not complete, then
\[
        \tau_{\pc}(F)<\tau_{\ind}(F).
\]
\end{proposition}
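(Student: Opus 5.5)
We prove both directions. If $F=K_m$, the proof of \cref{thm:clique} gives $\Phi_{K_m}=\Psi_{K_m}=1-x^m$, so the two equations coincide and $\tau_{\pc}(F)=\tau_{\ind}(F)$.

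For the converse, assume $F$ is connected and not complete. We first show that $\Phi_F(x)>\Psi_F(x)$ for every $x\in(0,1)$. By \eqref{eq:pc-le-ind}, every term $\ind(F-S)-\pc(F-S)$ in
\[
\Phi_F(x)-\Psi_F(x)=\sum_{S\subseteq V(F)}\bigl(\ind(F-S)-\pc(F-S)\bigr)x^{|S|}(1-x)^{m-|S|}
\]
is non-negative. Each weight $x^{|S|}(1-x)^{m-|S|}$ is strictly positive for $0<x<1$. So it suffices to exhibit one set $S$ with $\ind(F-S)>\pc(F-S)$.

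Since $F$ is connected and not complete, it contains two non-adjacent vertices $u,w$. Take a shortest $u$--$w$ path; its first three vertices $u',v,w'$ form an induced path $P_3$ with $u'w'\notin E(F)$. Let $S=V(F)\setminus\{u',v,w'\}$. Then $F-S\cong P_3$, which gives $\ind(F-S)=2$ and $\pc(F-S)=1$. Hence $\Phi_F(x)>\Psi_F(x)$ throughout $(0,1)$.

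Now put $t=\tau_{\pc}(F)\in(0,1)$. Then $mt=\Psi_F(t)<\Phi_F(t)$, so $f_{\ind}(t)=mt-\Phi_F(t)<0$. As shown in the proof of \cref{prop:parameters}, $f_{\ind}$ is strictly increasing on $[0,1]$ and vanishes exactly at $\tau_{\ind}(F)$. Therefore $t<\tau_{\ind}(F)$, i.e.\ $\tau_{\pc}(F)<\tau_{\ind}(F)$, and in particular the two thresholds differ.

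The only non-routine step is producing the witness set $S$, and the induced $P_3$ coming from a shortest path between two non-adjacent vertices handles it. Note that the weights need $x\in(0,1)$ strictly, which holds because $\tau_{\pc}(F)$ lies in $(0,1)$ by \cref{prop:parameters}.
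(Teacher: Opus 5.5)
Your proposal is correct and follows the paper's proof essentially step for step. Both use the clique computation $\Phi_{K_m}=\Psi_{K_m}=1-x^m$ for one direction, and for the other an induced $P_3$ from a shortest path, which gives $\Phi_F>\Psi_F$ on $(0,1)$, combined with strict monotonicity of $x\mapsto mx-\Phi_F(x)$ evaluated at $\tau_{\pc}(F)$.
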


\begin{proof}
If $F$ is complete, the equality follows from the calculation in the proof of \cref{thm:clique}. Conversely, suppose that $F$ is connected and not complete. Then $F$ contains an induced copy of $P_3$: take a shortest path between two non-adjacent vertices and use its first three vertices. Hence there exists $S\subseteq V(F)$ such that $F-S\cong P_3$. For this induced subgraph, $\ind(P_3)=2>1=\pc(P_3)$.
Since $\pc(G)\le\ind(G)$ for each graph $G$, every term in the expansion of $\Phi_F(x)-\Psi_F(x)$ is non-negative for $0<x<1$, and the term corresponding to this set $S$ is positive.  Thus $\Phi_F(x)>\Psi_F(x)$ for every $0<x<1$.
Let $t=\tau_{\pc}(F)$. Then $mt=\Psi_F(t)<\Phi_F(t)$, so $mt-\Phi_F(t)<0$. Since $x\mapsto mx-\Phi_F(x)$ is strictly increasing and vanishes at $\tau_{\ind}(F)$, we obtain $t<\tau_{\ind}(F)$.
\end{proof}

The gap in \cref{prop:strict-gap} reflects a real difference between the obstruction and the sufficient condition used here. The path-cover parameter is forced by the complete bipartite lower-bound construction, whereas the independence parameter is what is needed to invoke a Hamiltonicity criterion after the random factor is added. Determining the exact threshold for non-clique factors therefore remains open.

\begin{question}\label{que:nonclique}
Determine $\alpha^*(F)$ and $\alpha_{\pan}^*(F)$ for connected non-complete graphs $F$. In particular, what are the two thresholds when $F=P_3$?
\end{question}

\section*{Acknowledgments and AI disclosure}
Dingjia Mao and Feihong Yuan were partially supported by National Key Research and Development Program of China (2023YFA1010203).  Wenling Zhou was supported by the National Natural Science Foundation of China (12401457), the China Postdoctoral Science Foundation (2024M761780), the Natural Science Foundation of Shandong Province (ZR2024QA067), and the Young Talent of Lifting Engineering for Science and Technology in Shandong, China (SDAST2025QTA074).

AI was used to proofread the final draft of this paper.

\bibliographystyle{abbrv}
\bibliography{ref}
\end{document}